\documentclass[lettersize,journal]{IEEEtran}

\usepackage{amsmath,amssymb,amsfonts,mathtools}
\usepackage{amsthm}
\usepackage{bm}
\newtheorem{theorem}{Theorem}

\newtheorem{example}{Example}
\theoremstyle{remark}
\newtheorem{remark}{\textbf{Remark}}

\usepackage{graphicx}
\graphicspath{{Pictures/}}
\usepackage[caption=false,font=footnotesize,labelfont=sf,textfont=sf]{subfig}
\usepackage{stfloats}
\usepackage{xcolor}
\usepackage{url}
\usepackage{hyperref}
\hypersetup{colorlinks=true,linkcolor=blue,citecolor=blue,urlcolor=blue,breaklinks=true}

\usepackage{siunitx}

\usepackage{booktabs}
\usepackage{tabularx}
\usepackage{multirow}
\usepackage{makecell}
\usepackage{diagbox}
\usepackage{array}

\usepackage[ruled,vlined,linesnumbered]{algorithm2e}
\SetKwInput{KwIn}{Input}
\SetKwInput{KwOut}{Output}

\usepackage{comment}
\usepackage{float}

\usepackage{tikz}
\usetikzlibrary{arrows.meta, positioning}

\usetikzlibrary{shapes.geometric, arrows.meta, positioning, shadows, calc}

\usepackage{cite}
\SetKwBlock{Phase}{}{end}

\begin{document}

\title{Difference-of-Convex Regularization for Graph Learning by Differentiable Programming}

\author{Liping Tao and Chee Wei Tan, \textit{Senior Member, IEEE}

\thanks{Liping Tao (liping.tao@163.com) is with the College of Computing and Data Science, Nanyang Technological University, Singapore 639798.}

\thanks{Chee Wei Tan (cheewei.tan@ntu.edu.sg) is with the College of Computing and Data Science, Nanyang Technological University, Singapore 639798.}
}

% The paper headers
\markboth{Journal of \LaTeX\ Class Files,~Vol.~14, No.~8, August~2021}%
{Shell \MakeLowercase{\textit{et al.}}: A Sample Article Using IEEEtran.cls for IEEE Journals}

\maketitle

\begin{abstract}
Laplacian-regularized minimization is fundamental in signal processing and machine learning, but is limited by the dense and ill-conditioned nature of the graph Laplacian pseudoinverse. While the Laplacian itself is sparse, its pseudoinverse is dense and often ill-conditioned, rendering direct computation impractical at scale. Moreover, pseudoinverse learning is more challenging than Laplacian learning. To address this challenge, this paper considers the setting where the graph Laplacian is given and proposes a Difference-of-Convex Regularizer (DCR) graph learning framework that approximates the spectral action of the Laplacian pseudoinverse without direct inversion via regularized Maximum Likelihood Estimation (MLE). By reformulating Laplacian-Regularized Nonnegative Least Squares (LR-NNLS) through a dual representation, DCR decouples pseudoinverse learning from instance-specific inference and enables efficient primal solution reconstruction via a differentiable dual-guided learning scheme. We establish theoretical guarantees on stability and the existence of a unique fixed point for DCR algorithm. Numerical experiments demonstrate improved performance over convex solvers and graph filtering baselines and robust performance across diverse graph topologies.
\end{abstract}

\begin{IEEEkeywords}
Laplacian-regularized optimization, Graph learning, Toland duality, Difference-of-convex, Differentiable programming. 
\end{IEEEkeywords}

\section{Introduction}
\label{sec:itr}
Laplacian-regularized minimization \cite{tuck2019distributed} is a fundamental optimization paradigm that integrates task-specific objectives with graph-based smoothness priors, enabling learning and estimation to exploit graph structure rather than treating variables independently~\cite{yin2016laplacian}. It has been widely used in machine learning, computer vision, and statistical estimation. A broad class of Laplacian-Regularized Minimization Problems (LRMPs) can be written as:
\begin{equation}
\min_{x \in \mathbb{R}^n} \; f(x) + \tfrac{1}{2} x^\top L x,
\label{eq:lrmp}
\end{equation}
where $f$ is convex and $L \succeq 0$ is a symmetric, singular graph Laplacian with $L\mathbf{1}=0$, promoting smoothness by penalizing discrepancies across graph-connected components~\cite{tuck2019distributed}.

In many real-world applications, LRMPs arise with intrinsic constraints. The variables often represent physical or semantic quantities such as abundances, activation coefficients, importance scores, or resource allocations that are inherently nonnegative. This naturally leads to constrained formulations that enforce graph-induced smoothness while preserving feasibility and interpretability. A representative example is the Laplacian-Regularized Nonnegative Least-Squares (LR-NNLS) problem:
\begin{equation}
\label{prob:LR-NNLS}
\min_{x \succeq 0} \quad \frac{1}{2}\|Ax-b\|_2^2 + \frac{1}{2} x^\top L x,
\end{equation}
which is widely used in applications such as semi-supervised ranking on graphs \cite{gao2011semi} and graph-guided nonnegative coding \cite{el2013representing}. These ideas can also be extended to matrix problems, such as hyperspectral unmixing \cite{ammanouil2015graph}, graph-regularized nonnegative matrix factorization \cite{cai2010graph}, and Laplacian-regularized nonnegative representation \cite{zhao2020laplacian}. In all these cases, the nonnegativity constraint plays a key role in defining the solution space and influencing how the problem is solved.

\begin{figure}[!t]
\centering
\includegraphics[trim=1.2cm 0cm 0cm 0cm, clip, width=\linewidth]{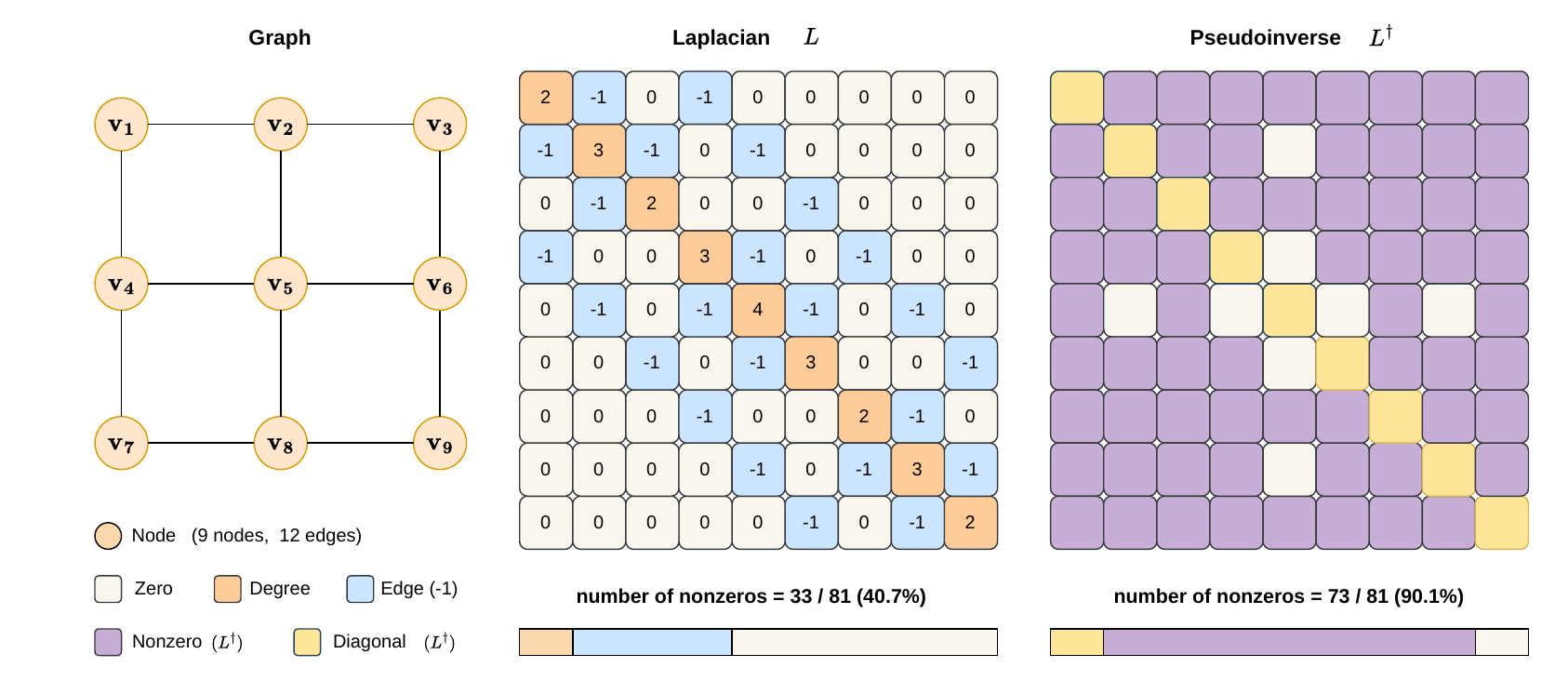}
\caption{While the graph Laplacian $L$ is sparse, its pseudoinverse $L^\dagger$ is dense, rendering direct computation memory- and compute-intensive at scale.}
\label{fig:lrmp_graph}
\end{figure}

While the Laplacian term enforces smoothness across connected nodes, the singularity of $L$ implies that classical LRMP solutions are characterized through the Moore--Penrose pseudoinverse $L^{\dagger}$~\cite{boyd2004convex}. In what follows, we refer to $L^{\dagger}$ as the Laplacian pseudoinverse. As shown in Figure \ref{fig:lrmp_graph}, although $L$ is sparse, its pseudoinverse is dense and often ill-conditioned, resulting in $\mathcal{O}(n^3)$ time complexity and $\mathcal{O}(n^2)$ memory complexity for large graphs~\cite{ranjan2014incremental}. This challenge is further exacerbated under additional constraints, such as nonnegativity in~\eqref{prob:LR-NNLS}, which preclude closed-form solutions and require iterative or implicit solvers. More broadly, the difficulty of manipulating dense inverse operators constitutes a recurring bottleneck in modern learning systems~\cite{shivashankar2025maintainability}, where high-dimensional graph or kernel-induced mappings rarely admit direct computation.

%\vspace{1mm}
\textit{Motivation.}
Typical approaches to LRMPs include projected gradient methods, which provide simple and scalable solutions under constraints without requiring explicit pseudoinverse computation. However, their convergence can be slow for ill-conditioned Laplacians, and they do not exploit the global spectral structure of the problem. Recent advances suggest replacing intractable operators with efficiently computable surrogates, such as spectral graph filtering and polynomial approximations (e.g., Chebyshev filtering)~\cite{hammond2011wavelets, shuman2018distributed, cheng2025iterative}, a paradigm widely used in graph signal processing~\cite{choi2024graph, kunegis2009learning, kim2008learning}. While these methods avoid explicit pseudoinverse computation, they rely on fixed spectral approximations rather than learning problem-adaptive inverse operators, and remain limited under nonnegativity constraints, leaving room for further improvement. This raises a central question: \emph{Given Laplacian $L$, can we design a graph learning framework that approximates $L^{\dagger}$ in a problem-adaptive manner without direct computation, while naturally supporting nonnegativity constraints?}

Graph learning \cite{xia2026graph} often studies graph structures or graph-induced operators. In this work, we assume the Laplacian is given and focus on learning an approximation of its pseudoinverse, which often leads to a nonconvex problem due to its inherent complexity. Difference-of-Convex (DC) programming \cite{tao1988duality, shen2016disciplined, yao2023globally, shen2023minimizing} with regularization, i.e., DC regularization \cite{chen2025computing, li2021direct, tao1998dc}, provides a principled way to incorporate structural priors through tractable formulations. \textit{Toland duality} \cite{toland1978duality} further offers a useful framework for understanding such DC formulations from a nonconvex duality perspective. Moreover, regularized DC models allow better control of spectral properties while remaining efficiently solvable via the Convex--Concave Procedure (CCCP) \cite{yuille2003concave, lipp2016variations, yuille2001concave, tao2026accelerating}. This motivates learnable approximations of the graph Laplacian pseudoinverse that exploit the underlying graph structure and handle the resulting non-convex optimization via DC regularization and CCCP.

%\vspace{1mm}
\textit{Contributions.}
To overcome the above limitations, we propose a structured graph learning framework for Laplacian-regularized optimization that replaces \(L^{\dagger}\) with a learnable approximation via regularized Maximum Likelihood Estimation (MLE) and differentiable programming~\cite{blondel2024elements}, enabling end-to-end optimization of complex programs through automatic differentiation~\cite{tao2026learning}. Our main contributions are:
\begin{itemize}
   \item We study the LR-NNLS problem~\eqref{prob:LR-NNLS}, derive its dual formulation to decouple $A$ from $L^{\dagger}$, and use Karush-Kuhn-Tucker (KKT) conditions to reveal the structure underlying primal solution recovery, motivating learnable and problem-adaptive pseudoinverse approximation.

   \item We propose a Difference-of-Convex Regularizer (DCR) graph learning framework that decouples pseudoinverse learning from instance-specific inference. A learned approximation is obtained via shrinkage-regularized CCCP iterations from regularized MLE, and primal solutions are recovered through dual-guided differentiable learning. The \textit{novelty} of DCR lies in unifying learning-based pseudoinverse approximation, non-convex DC regularization with CCCP-based spectral control, and KKT-guided nonnegativity-aware reconstruction.
   
   \item We establish iteration stability and unique fixed-point guarantees for the DCR algorithm. Experiments across diverse graph topologies and scales demonstrate robustness across graph structures and clear gains over convex solvers and graph filtering baselines. The source code is publicly available.\footnote{Source code is available at \url{https://github.com/convexsoft/LRMP}.}
\end{itemize}

\section{Related Work}
\label{sec:rw}
\subsection{Nonnegative LRMPs}

% \vspace{1mm}
\textit{Semi-Supervised Ranking.} 
A closely related line of work is the semi-supervised ranking framework of~\cite{gao2011semi}, which studies large-scale graph-based ranking with rich node and edge metadata. The framework formulates ranking as an optimization problem over a nonnegative score vector, combining graph-based smoothness induced by random walks or graph Laplacians with explicit pairwise preference supervision. As a special case, the Laplacian rank formulation minimizes a quadratic objective defined by a directed graph Laplacian, subject to margin-based pairwise preference constraints and a nonnegativity requirement on the ranking vector. This yields a supervised Laplacian-regularized optimization problem that extends PageRank-style models and is closely related to the class of nonnegative LRMPs of our work.

% \vspace{1mm}
\textit{Graph-Aware Dictionary Learning.}
A related example appears in the badge-based document representation framework of~\cite{el2013representing}, which encodes each document as a nonnegative sparse coefficient vector over a learned badge dictionary. The coding stage solves a nonnegative least-squares/lasso-type problem with a graph-guided fused regularizer on a badge relation graph \cite{kim2009multivariate, chen2012smoothing}, encouraging correlated badges to take similar coefficients via weighted pairwise differences. This yields a structured nonnegative optimization that combines reconstruction, sparsity, and graph-based smoothness. Although the regularizer is not a quadratic Laplacian form, it is closely related to nonnegative LRMPs in that it integrates nonnegativity with graph-structured regularization for coherent representations.

% \vspace{1mm}
\textit{Hyperspectral Data Unmixing.}
Graph Laplacian regularization has also been studied in nonnegative matrix-valued optimization problems beyond ranking. In hyperspectral data unmixing, Ammanouil et al.~\cite{ammanouil2015graph} proposed a Laplacian-regularized convex formulation to estimate abundance maps under nonnegativity and sum-to-one constraints. The model combines a quadratic data-fidelity term with a graph-based smoothness regularizer, promoting spatial and spectral consistency over a similarity graph. This formulation can also be viewed as a nonnegative Laplacian-regularized least-squares problem, closely related to the nonnegative LRMPs considered in this work, differing mainly in its matrix-valued variables and application setting.

% \vspace{1mm}
\textit{Nonnegative Matrix Factorization.}
Nonnegative Laplacian-regularized problems also arise in nonnegative matrix factorization. Graph regularized nonnegative matrix factorization \cite{cai2010graph} extends standard nonnegative matrix factorization by incorporating a Laplacian term to preserve data geometry, leading to a quadratic reconstruction loss with a smoothness penalty under elementwise nonnegativity constraints. It is a nonnegative Laplacian-regularized problem where the Laplacian enforces graph smoothness and feasibility is maintained via constrained updates. Although designed for matrix-valued representations in clustering and representation learning, its core structure aligns closely with the nonnegative Laplacian-regularized formulations.

% \vspace{1mm}
\textit{Nonnegative Self-Representation.}
Nonnegative Laplacian-regularized formulations have also been studied in self-representation models for clustering and dimensionality reduction. For instance, Laplacian-regularized nonnegative representation~\cite{zhao2020laplacian} learns a nonnegative coefficient matrix under a self-expressive framework, enhanced with graph-based smoothness regularization. This formulation promotes interpretability via nonnegativity while enforcing smoothness over a similarity graph. It can be viewed as a matrix-valued extension of nonnegative LRMPs, closely related to LR-NNLS, but differing in its self-representation structure and loss design.

\subsection{Graph Learning}
\textit{Laplacian / Graph Structure Learning.}
A major line of work focuses on learning the graph Laplacian. Pavez et al.~\cite{pavez2016generalized} formulate this task as maximum likelihood estimation of a precision matrix under generalized Laplacian constraints, enabling flexible Gaussian Markov random field modeling. Data-driven approaches further construct graphs via structured representations, such as the nonnegative low-rank and sparse framework in \cite{zhuang2012non}, which captures global and local structures, and graph refinement methods like \cite{luo2012forging}, which project an initial similarity matrix onto a nonnegative, low-rank, and positive semidefinite set to obtain a denoised graph.

\textit{Laplacian Pseudoinverse Learning.}
Complementary to structure learning, recent works aim to directly learn or approximate the Laplacian pseudoinverse, which encodes global connectivity and diffusion behavior. Alfke et al.~\cite{alfke2021pseudoinverse} learn the pseudoinverse implicitly from graph signals via algebraic constraints, and Zhu et al.~\cite{zhu2024efficient} develop structured approximations for large graphs, while Lu et al.~\cite{lu2025diagonal} improve efficiency and stability via diagonal preconditioning. Our work focus on learning the pseudoinverse of a given Laplacian matrix using regularized MLE and CCCP.

\section{Laplacian-Regularized Nonnegative Least Squares}
\label{sec:system-model}
Given a data matrix $A \in \mathbb{R}^{m\times n}$ and a vector $b \in \mathbb{R}^m$, the objective is to obtain a nonnegative decision variable $x \in \mathbb{R}^n$ by solving:
\begin{equation}
\label{prob:Learning_LRMP_primal}
\min_{x \succeq 0} \quad \frac{1}{2}\|Ax-b\|_2^2 + \frac{1}{2} x^\top L x,
\end{equation}
where $L \in \mathbb{R}^{n\times n}$ is given and denotes the graph Laplacian associated with an undirected weighted graph $(\mathcal{V}, \mathcal{E})$, where $w_{ij} \ge 0$ denotes the weight of edge $(i,j)$. The quadratic penalty admits:
\begin{equation}
\label{eq:LR-NNLS-Laplacian}
x^\top L x = \sum_{(i,j)\in\mathcal{E}} w_{ij}(x_i - x_j)^2.
\end{equation}

\textit{Spectral Challenges.}
Assuming $L^{\dagger}$ is known, solving~\eqref{prob:Learning_LRMP_primal} remains challenging due to spectral ill-conditioning. As shown in Fig.~\ref{fig:Spectrum_and_Convergence_of_LRNNLS}, using a chain graph Laplacian ($n=500$, $m=25$, $w=0.01$, $\kappa(A^\top A+L)\approx 3.64\times10^4$) as a representative ill-conditioned instance, although the objective gap decreases rapidly for the first-order methods such as projected gradient descent, $\|G_\eta(x)\|$ remains above $10^{-3}$ throughout 3000 iterations, indicating slow convergence to stationarity. The nonnegativity constraint further complicates the optimization.

\begin{figure}
    \centering
    \includegraphics[width=\columnwidth]{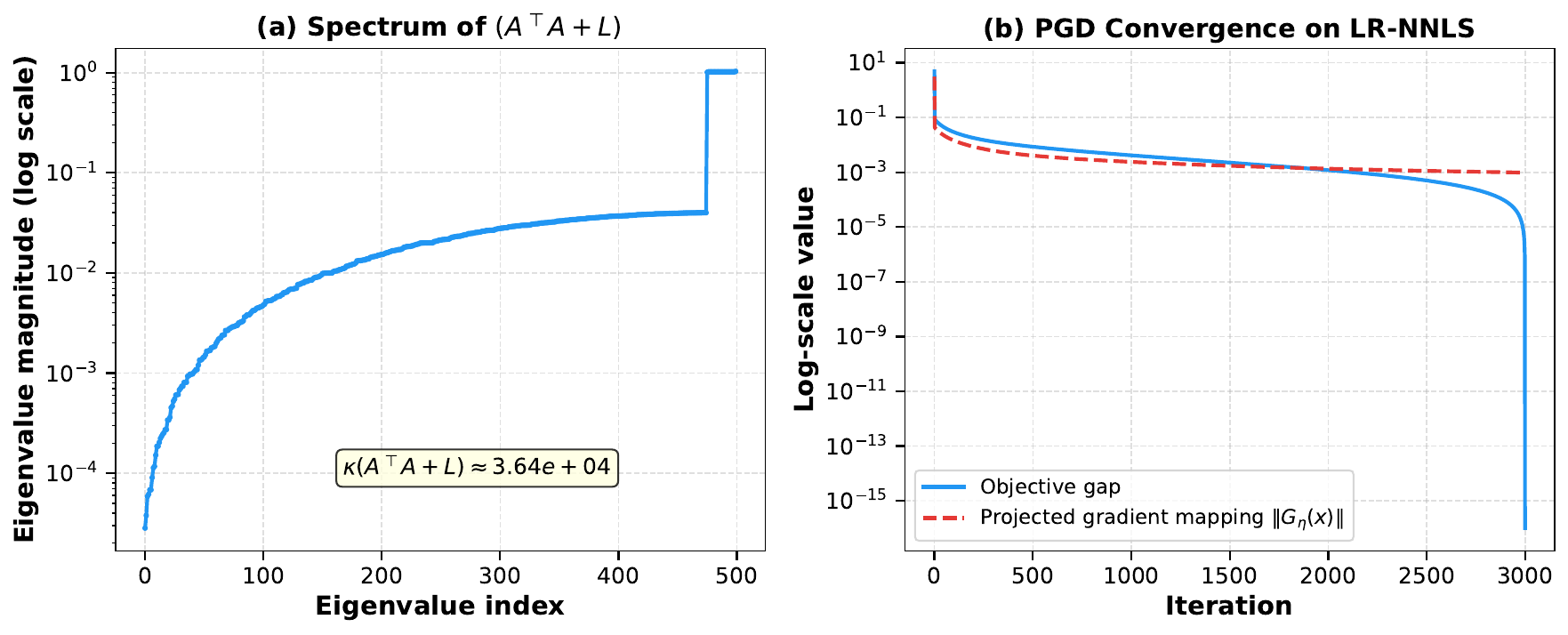}
    \caption{Spectral and convergence properties of the LR-NNLS problem~\eqref{prob:Learning_LRMP_primal} ($n=500$, $m=25$, chain graph, $w=0.01$, $\kappa(A^\top A+L)\approx 3.64\times10^4$). (a) Eigenvalue spectrum of $(A^\top A+L)$, showing severe ill-conditioning with smallest eigenvalue near $10^{-5}$. 
    (b) Although the objective gap decreases rapidly to near machine precision, the projected gradient mapping norm $\|G_\eta(x)\|$ remains above $10^{-3}$ throughout 3000 iterations, indicating slow convergence to stationarity under ill-conditioning.}
    \label{fig:Spectrum_and_Convergence_of_LRNNLS}
\end{figure}

\subsection{Dual Perspective and KKT Conditions}
Since the complexity of the matrix $A$ directly impacts the difficulty of approximating $(A^\top A + L)^{\dagger}$, we adopt a dual formulation to decouple pseudoinverse from the specific realization of $A$. Introducing the auxiliary variable $y = Ax - b$, dual variables $\lambda \in \mathbb{R}^m$ with the constraint $y = Ax - b$ and $\mu \succeq 0$ with the constraint $x \succeq 0$, the resulting Lagrangian is:
\begin{equation}
\label{eq:Learning_LRMP_lagrangian}
\mathcal{L}(x,y;\lambda,\mu)
=
\tfrac12\|y\|_2^2
+ \tfrac12 x^\top L x
+ \lambda^\top(Ax - b - y)
- \mu^\top x .
\end{equation}

\vspace{1mm}
\textit{KKT Conditions.}
Minimizing $\mathcal{L}$ with respect to $y$ and $x$ yields $y^\star = \lambda$ and the stationarity condition $Lx + A^\top \lambda - \mu = 0$, respectively. Together with feasibility, and complementary slackness, the KKT conditions are:
\begin{align}
x^\star \;\succeq\; 0, \quad y^\star = Ax^\star - b, \quad \mu^\star \succeq 0, \label{eq:feasibility}\\
(\mu^\star)^{\top} x^\star = 0, \label{eq:complementary-slackness}\\
 Lx^\star + A^\top \lambda^\star - \mu^\star = 0 . \label{eq:stationarity}
\end{align}

\begin{theorem}
Assuming that $L^{\dagger}$ is known, from the KKT conditions \eqref{eq:feasibility}-\eqref{eq:stationarity}, the primal solution of \eqref{prob:Learning_LRMP_primal} can be denoted as:
\begin{align}
x^\star = L^{\dagger}(\mu^\star - A^\top \lambda^\star) + c\mathbf{1}, \quad c \in \mathbb{R}, 
\label{eq:primal-solution-recovery-from-kkt}
\end{align}
where $\mu^\star$ and $\lambda^\star$ are dual solutions, and $c$ must be determined to satisfy the KKT conditions.
\end{theorem}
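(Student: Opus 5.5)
The plan is to read the stationarity condition \eqref{eq:stationarity} as a singular linear system in $x^\star$ and then use the standard description of the solution set of $Lx=r$ through the Moore--Penrose pseudoinverse. Set $r^\star := \mu^\star - A^\top\lambda^\star$. By \eqref{eq:stationarity}, $Lx^\star = r^\star$. I will work under the implicit assumption that the graph $(\mathcal V,\mathcal E)$ is connected with positive weights on its edges, so that $\ker L=\operatorname{span}\{\mathbf 1\}$. From \eqref{eq:LR-NNLS-Laplacian}, $x^\top Lx=0$ forces $x_i=x_j$ across every edge, which gives exactly this kernel. If the graph has several components, the same argument goes through with $c\mathbf 1$ replaced by a combination of component indicator vectors, and I would state that as a remark.

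The first step is consistency, i.e.\ that $r^\star\in\operatorname{range}(L)$. This is automatic: $r^\star = Lx^\star$ by \eqref{eq:stationarity}. Equivalently, since $L$ is symmetric, $\operatorname{range}(L)=(\ker L)^\perp=\mathbf 1^\perp$, and indeed $\mathbf 1^\top r^\star=\mathbf 1^\top L x^\star=0$.

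The second step uses the pseudoinverse identities for symmetric $L$. The matrix $LL^\dagger=L^\dagger L$ is the orthogonal projector onto $\operatorname{range}(L)$, which here is $I-\tfrac1n\mathbf 1\mathbf 1^\top$. Hence
\[
L\bigl(L^\dagger r^\star\bigr)=LL^\dagger r^\star=r^\star ,
\]
so $L^\dagger r^\star$ is a particular solution. Then $L\bigl(x^\star-L^\dagger r^\star\bigr)=0$, so $x^\star-L^\dagger r^\star\in\ker L=\operatorname{span}\{\mathbf 1\}$. This gives $x^\star=L^\dagger(\mu^\star-A^\top\lambda^\star)+c\mathbf 1$ for some $c\in\mathbb R$, which is \eqref{eq:primal-solution-recovery-from-kkt}. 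Concretely, $c=\tfrac1n\mathbf 1^\top x^\star$, because $L^\dagger r^\star\perp\mathbf 1$.

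The third step is to explain why $c$ is pinned down by the remaining KKT conditions rather than left free. Stationarity alone cannot fix $c$, since $L\mathbf 1=0$; the information must come from \eqref{eq:feasibility} and \eqref{eq:complementary-slackness}. Substituting into $y^\star=\lambda^\star=Ax^\star-b$ gives $cA\mathbf 1=\lambda^\star+b-AL^\dagger r^\star$. If $A\mathbf 1\neq 0$, this determines $c$ uniquely by projecting onto $A\mathbf 1$. Otherwise $c$ must be selected through $x^\star\succeq 0$ and $(\mu^\star)^\top x^\star=0$.

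I expect this third step to be the only delicate point. It is where the kernel assumption and the choice of determining condition matter. The statement only claims that $c$ ``must be determined'', so it suffices to show that every KKT point has this form and to indicate which conditions fix $c$.
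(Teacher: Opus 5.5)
Your steps 1--2 are the paper's argument: read \eqref{eq:stationarity} as $Lx^\star=\mu^\star-A^\top\lambda^\star$, take $L^\dagger(\mu^\star-A^\top\lambda^\star)$ as a particular solution, and use $\mathcal{N}(L)=\mathrm{span}\{\mathbf{1}\}$ for a connected graph. Your reason that the right-hand side lies in $\mathcal{R}(L)$ is the correct one: it equals $Lx^\star$. The paper instead attributes this to complementary slackness, which is not what does the work.

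Where you genuinely differ is in fixing $c$. The paper uses complementary slackness on the active set $\mathcal{I}=\{i:\mu^\star_i>0\}$. With $l=L^\dagger(\mu^\star-A^\top\lambda^\star)$, the condition $x^\star_i=0$ forces $c=-l_i$ for every $i\in\mathcal{I}$, so $l$ must be constant on $\mathcal{I}$. When $\mathcal{I}=\emptyset$, the paper declares $c$ free and typically takes $c=0$.

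You instead use the relation $\lambda^\star=y^\star=Ax^\star-b$. This gives $c\,A\mathbf{1}=\lambda^\star+b-AL^\dagger(\mu^\star-A^\top\lambda^\star)$, which pins $c$ uniquely whenever $A\mathbf{1}\neq 0$, regardless of the active set. Your route is the more robust one here. It also shows that the paper's claim that $c$ is arbitrary when $\mathcal{I}=\emptyset$ is valid only if $A\mathbf{1}=0$, since otherwise shifting by $c\mathbf{1}$ changes $\|Ax-b\|_2^2$ and hence the objective.

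The paper's route, in turn, needs only the support of $\mu^\star$ and the vector $l$, with no use of $b$ or $A\mathbf{1}$. It is exactly what handles your degenerate case $A\mathbf{1}=0$, which you correctly hand off to \eqref{eq:feasibility}--\eqref{eq:complementary-slackness}.

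One bookkeeping point: $y^\star=\lambda^\star$ is not among the numbered conditions \eqref{eq:feasibility}--\eqref{eq:stationarity}. It is the stationarity of the Lagrangian in $y$, stated in the text just before them, so cite it as such.
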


\begin{proof}
From the KKT condition \eqref{eq:stationarity}, we obtain:
\begin{align}
Lx^\star = \mu^\star - A^\top \lambda^\star. \label{eq:Lx-equation}
\end{align}

To characterize the solution structure of \eqref{eq:Lx-equation}, we recall fundamental properties of linear algebra \cite{boyd2004convex, strang2022introduction}. For any matrix $G \in \mathbb{R}^{m \times n}$ and equation $Gx=g$, the range space is $\mathcal{R}(G) = \{Gx \mid x \in \mathbb{R}^n\}$ and the null space is $\mathcal{N}(G) = \{x \mid Gx=0\}$. The general solution, when it exists, takes the form $x=x_p+x_n$, where $Gx_p=g$ is a particular solution and $x_n \in \mathcal{N}(G)$ is an arbitrary null space component. Specifically:
\begin{itemize}
    \item If $G$ is invertible, $\mathcal{N}(G)=\{0\}$ and the unique solution is $x=G^{-1}g$.
    \item If $G$ is singular but $g \in \mathcal{R}(G)$, the general solution is $x=G^{\dagger}g+x_n$ where $x_n \in \mathcal{N}(G)$ and $G^{\dagger}$ denotes the Moore--Penrose pseudoinverse.
\end{itemize}

The graph Laplacian $L$ is symmetric positive semidefinite with $L\mathbf{1}=0$, where $\mathbf{1} \in \mathbb{R}^n$ denotes the all-ones vector. For a connected graph, $\mathcal{N}(L)=\text{span}\{\mathbf{1}\}$ is one-dimensional, where “span” describes all the linear combinations of a set of vectors. Therefore, the right-hand side of \eqref{eq:Lx-equation} must lie in $\mathcal{R}(L)$, which is guaranteed by the complementary slackness of the KKT conditions. Thus, the general solution of \eqref{eq:Lx-equation} is:
\begin{align}
x^\star = L^{\dagger}(\mu^\star - A^\top \lambda^\star) + c\mathbf{1}, \quad c \in \mathbb{R}. 
\label{eq:x-general-solution}
\end{align}

The constant $c$ in \eqref{eq:x-general-solution} is uniquely determined by primal feasibility in \eqref{eq:feasibility} and complementary slackness \eqref{eq:complementary-slackness}. Let $\mathcal{I} = \{i \mid \mu^\star_i > 0\}$ denote the active constraint set and $l = L^{\dagger}(\mu^\star - A^\top \lambda^\star)$. By complementary slackness, $x^\star_i = 0$ for all $i \in \mathcal{I}$, which yields:
\begin{align}
x^\star_i = l_i + c = 0 \quad \Rightarrow \quad c = -l_i, \quad \forall i \in \mathcal{I}. \label{eq:c-from-active-set}
\end{align}
If $\mathcal{I} \neq \emptyset$, consistency of the KKT conditions requires $l_i$ to be constant over $\mathcal{I}$, uniquely determining $c$; the remaining components $x^\star_j$ ($j \notin \mathcal{I}$) satisfy the nonnegativity constraint. If $\mathcal{I} = \emptyset$ (i.e., $\mu^\star = 0$), then $x^\star \succ 0$ and $c$ can be chosen arbitrarily without affecting the objective, typically $c=0$. In both cases, \eqref{eq:x-general-solution} completely characterizes the optimal solution via $L^{\dagger}$ and the dual variables.
\end{proof}

\begin{figure*}[!t]
\centering
\includegraphics[width=\linewidth]{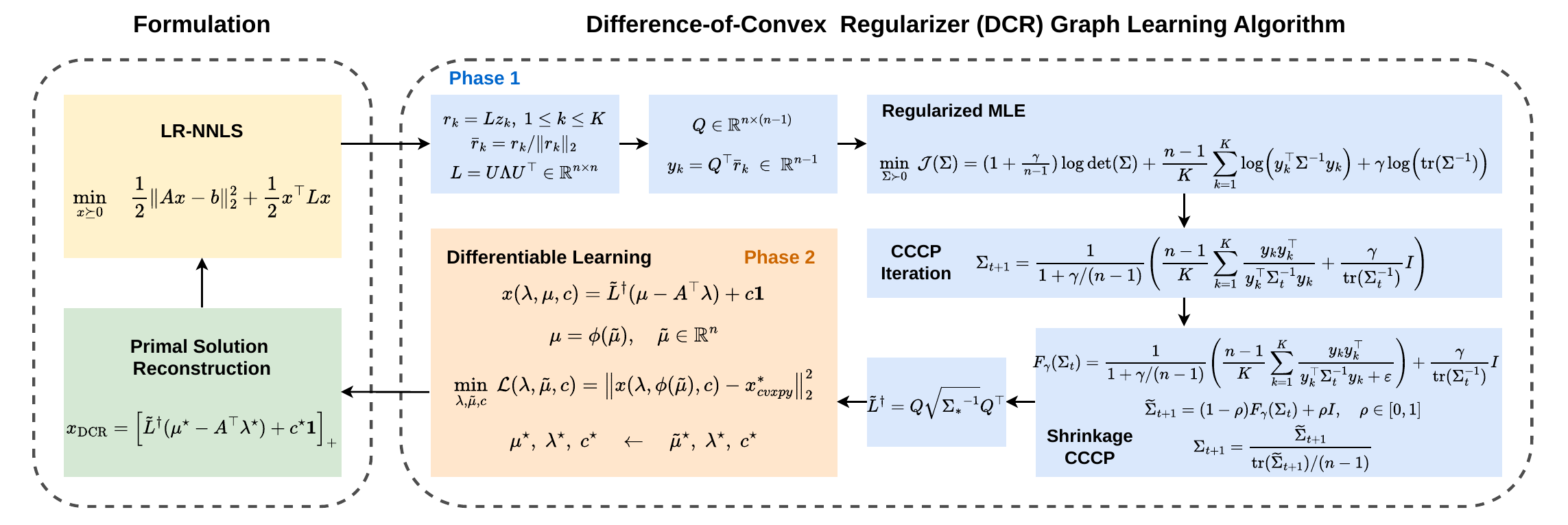}
\caption{DCR algorithm solves \eqref{prob:Learning_LRMP_primal} in two phases. Phase 1 learns a Laplacian pseudoinverse approximation $\tilde{L}^{\dagger}$ via regularized MLE and CCCP iteration with shrinkage regularization, while Phase 2 recovers the primal solution through dual-driven differentiable learning.}
\label{fig:DCR-flow}
\end{figure*}

\begin{example}
To validate \eqref{eq:x-general-solution}, we solve an LR-NNLS instance with $m=50$ and $n=30$. Given a connected graph Laplacian $L$, the primal problem is solved via CVXPY \cite{diamond2016cvxpy}, yielding the primal solution $x^\star$ and dual solutions $(\lambda^\star,\mu^\star)$. We reconstruct $x^\star$ from the dual solutions via: (i) computing $c$ from active constraints \eqref{eq:c-from-active-set}, (ii) least-squares fitting $c = \frac{1}{n}\mathbf{1}^\top(x^\star - l)$, and (iii) setting $c=0$. Methods (i) and (ii) achieve $\mathcal{O}(10^{-6})$ error with nearly identical $c$. In contrast, (iii) yields a $62\%$ relative error.
\end{example}

%\vspace{1mm}
\textit{Objective.}
Given $L$, if $L^{\dagger}$ were available, the primal solution could be recovered via~\eqref{eq:x-general-solution}. However, computing $L^{\dagger}$ using eigendecomposition or matrix inversion is computationally prohibitive for large-scale systems. This motivates approximations of $L^{\dagger}$ on $\mathcal{R}(L)$.

\section{Laplacian Estimation}
\label{sec:alg-DCR}
Since $L^{\dagger}$ is unknown, we propose the Difference-of-Convex Regularizer (DCR) graph learning framework to enable dual-guided primal solution recovery in~\eqref{eq:primal-solution-recovery-from-kkt}. As shown in Algorithm~\ref{alg:dlpls-core}, DCR consists of two phases (cf. Fig.~\ref{fig:DCR-flow}). First, given $L$, DCR learns a matrix $\tilde{L}^{\dagger}$ that approximates the spectral action of $L^{\dagger}$, i.e., $\tilde{L}^{\dagger} \approx L^{\dagger}$, by solving a regularized MLE problem via the CCCP method~\cite{yuille2003concave,lipp2016variations,yuille2001concave, tao2026accelerating}. Second, a differentiable reconstruction loss is introduced to recover the primal solution of~\eqref{prob:Learning_LRMP_primal} through gradient backpropagation using the high-accuracy reference solution $x^{\star}_{\mathrm{cvxpy}}$.

\subsection{Regularized MLE for Laplacian Pseudoinverse Learning}
\label{subsec:phase1}
Inspired by the Tyler's estimator \cite{tyler1987distribution, sun2014regularized, soloveychik2015tyler, palomar2025portfolio}, this phase aims at formulating a scale-invariant regularized MLE problem \cite{fisher1922mathematical} to learn $\tilde{L}^{\dagger} \in \mathbb{R}^{n \times n}$ for achieving $\tilde{L}^{\dagger} \approx L^{\dagger}$ on $\mathcal R(L)$.

\vspace{1mm}
\subsubsection{Tyler's Estimator}
Tyler's estimator stems from the Angular Central Gaussian (ACG) model, which describes scaled Gaussian vectors $x_i = \sqrt{\tau_i} \xi_i$ with $\xi_i \sim \mathcal{N}(0, \Sigma)$ and $\tau_i > 0$. To improve robustness over the sample covariance matrix $\hat{S} = \frac{1}{n} \sum_{i=1}^n x_i x_i^\top$, Tyler's estimator identifies the matrix $\Sigma \in \mathbb{R}^{p \times p}$ via the fixed-point \cite{sun2014regularized}:
\begin{equation}
\Sigma = \frac{p}{n} \sum_{i=1}^n \frac{x_i x_i^\top}{x_i^\top \Sigma^{-1} x_i},
\label{eq:tyler-fixed-point}
\end{equation}
which is the stationarity condition of the ACG negative log-likelihood:
\begin{equation}
\ell(\Sigma) = \log\det(\Sigma) + \frac{p}{n} \sum_{i=1}^n \log(x_i^\top \Sigma^{-1} x_i).
\end{equation}
Due to the scale invariance $\ell(c\Sigma) = \ell(\Sigma)$ ($c > 0$), a normalization such as $\operatorname{tr}(\Sigma) = p$, is typically imposed to ensure solution uniqueness while preserving directional information.

\vspace{1mm}
\subsubsection{Spectral Exploration}
In the absence of observed graph signals~\cite{pavez2016generalized}, we generate random directions to explore the spectrum of the Laplacian. For a connected graph, the Laplacian range space $\mathcal R(L)$ has dimension $n-1$. Let $Q\in\mathbb R^{n\times (n-1)}$ be any orthonormal basis spanning $\mathcal R(L)$, e.g., the eigenvectors of $L$ associated with its nonzero eigenvalues.

To cover all spectral directions, we sample isotropic Gaussian vectors $z_k \sim \mathcal N(0,I_n)$ and construct:
\begin{align}
r_k = L z_k, 
\quad 
\bar r_k = \frac{r_k}{\|r_k\|_2}, 
\quad k=1,\dots,K .
\end{align}

Then $r_k \in \mathcal R(L)$ and hence $\bar r_k \in \mathcal R(L)$. With sufficiently large $K$, the set $\{r_k\}$ spans $\mathcal R(L)$ with high probability. The normalization removes magnitude variability while preserving directional information. Following the Tyler framework~\cite{tyler1987distribution, soloveychik2015tyler}, each normalized vector admits intrinsic coordinates:
\begin{align}
y_k = Q^\top \bar r_k \in \mathbb R^{n-1},
\end{align}
so that $\bar r_k = Q y_k$. The subsequent estimation is invariant to the choice of orthonormal basis $Q$.

\begin{table}[!t]
\centering
\caption{Representative regularizers for Laplacian estimation.}
\label{tab:regularizers}
\begin{tabular}{p{1.5cm}p{1.5cm}p{4.7cm}}%{lcc}
\toprule
\textbf{Regularizer} & \textbf{Spectrum} & \textbf{Remarks} \\
\midrule
$\mathrm{tr}(\Sigma^{-1})$
& $\sum_i \lambda_i^{-1}$
& Strong penalty on small eigenvalues \\

$\mathrm{tr}(\Sigma^{-1/2})$
& $\sum_i \lambda_i^{-1/2}$
& Moderate spectral shrinkage \\

$\log(\mathrm{tr}(\Sigma^{-1}))$
& $\log(\sum_i \lambda_i^{-1})$
& Scale-invariant and mild regularization \\

$-\log\det(\Sigma)$
& $-\sum_i \log\lambda_i$
& Log-barrier against eigenvalue collapse \\

$\|\Sigma^{-1}\|_*$
& $\sum_i \lambda_i^{-1}$
& Equivalent to $\mathrm{tr}(\Sigma^{-1})$ for PSD matrices \\

$\|\Sigma^{-1/2}\|_*$
& $\sum_i \lambda_i^{-1/2}$
& Nuclear-norm form of spectral shrinkage \\
\bottomrule
\end{tabular}
\end{table}

\vspace{1mm}
\subsubsection{Regularized MLE on $\mathcal R(L)$}
To estimate $\tilde{L}^{\dagger}$ on $\mathcal R(L)$, we consider a regularized MLE formulation~\cite{palomar2025portfolio}:
\begin{align}
\min_{\Sigma\succ0}\;
J(\Sigma)
=\;
&(1+\tfrac{\gamma}{n-1})\log\det(\Sigma)
+
\frac{n-1}{K}\sum_{k=1}^K
\log\!\big(y_k^\top \Sigma^{-1} y_k\big) \notag \\
&+
\gamma\,\mathcal{R}(\Sigma^{-1}),
\end{align}
where $\mathcal{R}(\cdot)$ is a spectral regularizer acting on the eigenvalues of $\Sigma^{-1}$. Such regularizers are commonly used to control the spectrum of $\Sigma$, in particular to prevent eigenvalue degeneracy, i.e., eigenvalues approaching zero. Representative choices are summarized in Table~\ref{tab:regularizers} (see Appendix~\ref{sec:appendix-cccp-regularizers} for details). In this work, we adopt the scale-invariant regularizer $\mathcal{R}(\Sigma^{-1}) = \log\!\big(\mathrm{tr}(\Sigma^{-1})\big)$ to control the spectrum while preserving scale invariance. The resulting problem is:
\begin{align}
\label{prob:lrmp-regu-mle-Sigma-objective}
\min_{\Sigma\succ0}\;
J(\Sigma)
=\;
&(1+\tfrac{\gamma}{n-1})\log\det(\Sigma)
+
\frac{n-1}{K}\sum_{k=1}^K
\log\!\big(y_k^\top \Sigma^{-1} y_k\big)
\notag\\
&+
\gamma\log\!\big(\mathrm{tr}(\Sigma^{-1})\big),
\end{align}
with $\gamma\ge0$. To ensure identifiability under scale invariance, we impose $\mathrm{tr}(\Sigma)=n-1$.

Since $r_k = L z_k$ with isotropic $z_k \sim \mathcal N(0,I_n)$, we have $\mathbb E[r_k r_k^\top]=L^2$. Although normalization removes scale variability, the directional structure remains aligned with $L^2$ on $\mathcal R(L)$. Hence, the optimal solution $\Sigma_\star$ of \eqref{prob:lrmp-regu-mle-Sigma-objective} shares eigenvectors with $L$ and is spectrally aligned with $L^2$, implying $\Sigma_\star^{-1}\propto (L^2)^\dagger$ on $\mathcal R(L)$. Let ${\tilde{L}^{\dagger}}_{\mathcal R(L)} = \sqrt{\Sigma_\star^{-1}}$, defined on $\mathcal R(L)$ and sharing eigenvectors with $L$. Then $\tilde{L}^{\dagger}$ is given by:
\begin{align}
\tilde{L}^{\dagger} = Q\sqrt{\Sigma_\star^{-1}}Q^\top,
\end{align}
which serves as a learned approximation of $L^\dagger$.

\begin{algorithm}[!t]
\caption{\textbf{DCR Algorithm}}
\label{alg:dlpls-core}
\KwIn{$A$, $b$, $L$.}
\KwOut{$\tilde{L}^{\dagger}$ and $x_{\mathrm{DCR}}$.}

\textbf{Phase 1: Pseudoinverse Approximation Learning} \\
Compute an orthonormal basis $Q\in\mathbb{R}^{n\times (n-1)}$ of $\mathcal{R}(L)$; Sample $z_k\sim\mathcal{N}(0,I_n)$ and set
$r_k=Lz_k$, $\bar r_k=r_k/\|r_k\|_2$; Compute $y_k=Q^\top \bar r_k\in\mathbb{R}^{n-1}$ for $k=1,\dots,K$; \\

Initialize $\Sigma_1\gets I_{n-1}$; \\

\For{$t=1,\dots,T_{\mathrm{fp}}$}{
    $\displaystyle
    S \gets \frac{n-1}{K}\sum_{k=1}^K
    \frac{y_k y_k^\top}{y_k^\top\Sigma_t^{-1}y_k+\varepsilon};
    $
    
    $\displaystyle
    F_\gamma(\Sigma_t)\gets
    \frac{1}{1+\gamma/({n-1})}
    \Big(
    S+
    \frac{\gamma}{\mathrm{tr}(\Sigma_t^{-1})}I
    \Big);
    $
    
    $\widetilde\Sigma_{t+1}\gets(1-\rho)F_\gamma(\Sigma_t)+\rho I$; 
    
    $\displaystyle
    \Sigma_{t+1}\gets
    \frac{\widetilde\Sigma_{t+1}}
    {\mathrm{tr}(\widetilde\Sigma_{t+1})/({n-1})};
    $
}

Compute $\tilde{L}^{\dagger}\gets Q\sqrt{\Sigma_\star^{-1}}Q^\top$;

\vspace{0.5em}
\textbf{Phase 2: Dual-Driven Differentiable Reconstruction} \\

Initialize auxiliary variables $(\lambda,\tilde{\mu},c)$; \\

\For{$t=1,\dots,T_1$}{
    $\mu\gets\phi(\tilde{\mu})$; \\

    $x\gets \tilde{L}^{\dagger}(\mu-A^\top\lambda)+c\mathbf{1}$; \\
    
    $\mathcal L
    \gets
    \|x-x^\star_{\mathrm{cvxpy}}\|_2^2$; \\
    
    Update $(\lambda,\tilde{\mu},c)$ by one gradient step on $\mathcal L$;
}

$c^\star\gets
\displaystyle
\arg\min_c
\|
[\tilde{L}^{\dagger}(\mu^\star-A^\top\lambda^\star)+c\mathbf1]_+
-
x^\star_{\mathrm{cvxpy}}
\|_2^2$; \\

\Return{
$x_{\mathrm{DCR}}
=
[\tilde{L}^{\dagger}(\mu^\star-A^\top\lambda^\star)+c^\star\mathbf1]_+$.
}
\end{algorithm}

\subsection{CCCP Iteration}
For the regularized objective~\eqref{prob:lrmp-regu-mle-Sigma-objective}, define $S_k \triangleq y_k y_k^\top$ and introduce $X \triangleq \Sigma^{-1} \succ 0$, then the problem becomes:
\begin{align}
\label{prob:regu-mle-tyler-X-form}
\min_{X\succ 0}\;
F(X)
=
&-(1+\frac{\gamma}{n-1})\log\det(X) \notag \\
&+
\frac{n-1}{K}\sum_{k=1}^K 
\log\!\big(\mathrm{tr}(X S_k)\big)
+
\gamma \log\!\big(\mathrm{tr}(X)\big).
\end{align}
Write $F(X)=f(X)+g(X)$ with:
\begin{align}
f(X) &=-\;(1+\frac{\gamma}{n-1})\log\det(X), \label{eq:regu-mle-cccp-fx}
\\
g(X) &=
\frac{n-1}{K}\sum_{k=1}^K \log(\mathrm{tr}(X S_k))
+
\gamma \log(\mathrm{tr}(X)), \label{eq:regu-mle-cccp-gx}
\end{align}
where $f(X)$ is convex and $g(X)$ is concave. 

Defining $h(X)=-g(X)$, we can rewrite \eqref{prob:regu-mle-tyler-X-form} to the Difference-of-Convex (DC) programming \cite{tao1988duality, shen2016disciplined, lipp2016variations, yao2023globally, shen2023minimizing}:
\begin{align}
\label{eq:regu-mle-cccp-dcp}
\min_{X\succ 0}\;F(X)=f(X)-h(X).
\end{align}
According to the definition of the convex conjugate, $h(X)$ admits the representation:
\begin{align}
h(X)=\sup_Y \{\langle X,Y\rangle-h^*(Y)\}.
\end{align}
Substituting this representation to \eqref{eq:regu-mle-cccp-dcp} yields:
\begin{align}
f(X)-h(X)
=
\inf_Y \{f(X)-\langle X,Y\rangle+h^*(Y)\},
\end{align}
which leads to the associated DC dual problem \cite{tao1988duality}:
\begin{align}
\min_Y\; h^*(Y)-f^*(Y).
\end{align}
At an optimal solution $X^\star$, the DC optimality condition \cite{lipp2016variations, toland1978duality} requires:
\begin{align}
\partial h(X^\star)\subset \partial f(X^\star),
\end{align}
which reduces to the stationarity condition:
\begin{align}
\label{eq:regu-mle-cccp-dcp-stationarity}
\nabla f(X^\star)=\nabla h(X^\star),
\end{align}
when $f$ and $h$ are differentiable.

The CCCP method \cite{yuille2003concave, lipp2016variations, yuille2001concave} provides an iterative scheme for the stationarity condition \eqref{eq:regu-mle-cccp-dcp-stationarity}. Specifically, linearizing the concave part $g(X)$ in \eqref{eq:regu-mle-cccp-gx} at $X_t$ yields the
following convex subproblem:
\begin{equation}
\label{eq:cccp-subproblem}
X_{t+1}
\in
\arg\min_{X\succ 0}
f(X)
+
\langle \nabla g(X_t), X \rangle .
\end{equation}
The optimality condition gives:
\begin{align}
\nabla f(X_{t+1}) = - \nabla g(X_t).
\end{align}
Since:
\begin{align}
\nabla f(X) &= -(1+\frac{\gamma}{n-1})X^{-1}, \\
\nabla g(X) &= \frac{n-1}{K}\sum_{k=1}^K \frac{S_k}{\mathrm{tr}(X S_k)} + \frac{\gamma}{\mathrm{tr}(X)}I,
\end{align}
we obtain:
\begin{equation}
\label{eq:cccp-x-iteration}
(1+\frac{\gamma}{n-1})X_{t+1}^{-1}
=
\frac{n-1}{K}\sum_{k=1}^K
\frac{y_k y_k^\top}{y_k^\top X_t y_k}
+
\frac{\gamma}{\mathrm{tr}(X_t)}I .
\end{equation}
Using $\Sigma_t=X_t^{-1}$ yields the regularized fixed-point iteration:
\begin{equation}
\label{eq:cccp-regu-mle-sigma-iteration}
\Sigma_{t+1}
=
\frac{1}{1+\gamma/({n-1})}
\left(
\frac{n-1}{K}\sum_{k=1}^K
\frac{y_k y_k^\top}{y_k^\top \Sigma_t^{-1} y_k}
+
\frac{\gamma}{\mathrm{tr}(\Sigma_t^{-1})}I
\right).
\end{equation}
When $\gamma=0$, the iteration reduces to~\eqref{eq:tyler-fixed-point}.

By the standard CCCP descent property~\cite{yuille2003concave}, i.e., $F(X_{t+1}) \le F(X_t)$, we equivalently obtain:
\begin{equation}
\label{eq:mono-sigma}
\mathcal J(\Sigma_{t+1})
\le
\mathcal J(\Sigma_t),
\qquad \forall t \ge 0.
\end{equation}

We solved \eqref{prob:lrmp-regu-mle-Sigma-objective} using three approaches: the CCCP iteration \eqref{eq:cccp-regu-mle-sigma-iteration}, a convex solver-based method, and the Disciplined Convex–Concave Programming (DCCP) package \cite{shen2016disciplined}. All three methods converge to the same stationary point, providing numerical validation of the CCCP iteration.

\begin{remark}
\label{rem:tyler-fw}
Alternatively, problem \eqref{prob:regu-mle-tyler-X-form} can also be solved using the Frank--Wolfe (FW) algorithm \cite{frank1956algorithm, millan2023frank}, originally developed for minimizing convex quadratic functions over polytopes \cite{yurtsever2022cccp}. Specifically, recall the DC objective \eqref{eq:regu-mle-cccp-dcp}:
\begin{equation}
\min_{X\succ 0} \; F(X)= f(X)-h(X).
\end{equation}
Introducing an epigraph variable yields:
\begin{equation}
\label{prob:lrmp-regu-mle-fw-objective}
\min_{X,y}\; y - h(X)
\quad
\text{s.t.}\quad
f(X)\le y .
\end{equation}

Applying the FW algorithm, construct the Lagrangian function and the linear minimization step at $X_t$ solves \cite{yurtsever2022cccp}:
\begin{equation}
\min_{X,y}
\;
y - \langle \nabla h(X_t), X \rangle
\quad
\text{s.t.}\quad
f(X)\le y,
\end{equation}
then we can get $\nabla f(X_t^*)=\nabla h(X_t)$. Since:
\begin{align}
\nabla f(X)
&=
-(1+\frac{\gamma}{n-1})X^{-1},
\\
\nabla h(X)
&=
-\frac{n-1}{K}\sum_{k=1}^K
\frac{y_k y_k^\top}{y_k^\top X y_k}
-
\frac{\gamma}{\mathrm{tr}(X)}I,
\end{align}
we obtain:
\begin{align}
(1+\frac{\gamma}{n-1})X_{t+1}^{-1}
=
\frac{n-1}{K}\sum_{k=1}^K
\frac{y_k y_k^\top}{y_k^\top X_t y_k}
+
\frac{\gamma}{\mathrm{tr}(X_t)}I.
\end{align}
That is:
\begin{equation}
\label{eq:fw-mle-sigma-iteration}
\Sigma_{t+1}
=
\frac{1}{1+\gamma/({n-1})}
\left(
\frac{n-1}{K}\sum_{k=1}^K
\frac{y_k y_k^\top}{y_k^\top \Sigma_t^{-1} y_k}
+
\frac{\gamma}{\mathrm{tr}(\Sigma_t^{-1})}I
\right),
\end{equation}
which coincides with the iteration update~\eqref{eq:cccp-regu-mle-sigma-iteration}. Thus, CCCP is equivalent to applying the FW algorithm for \eqref{prob:regu-mle-tyler-X-form}. In fact, FW-based methods have increasingly been used to solve related problems arising from CCCP frameworks.
\end{remark}

\begin{theorem}%[Existence of a fixed point via nonlinear Perron--Frobenius]
\label{the:npf-tyler}
Let $\{y_k\}_{k=1}^K \subset \mathbb R^{n-1}$ satisfy $\sum_{k=1}^K y_k y_k^\top \succ 0$. For $\gamma \ge 0$, define the mapping $M_\gamma:\mathbb S_{++}^{n-1} \to \mathbb S_{++}^{n-1}$ by:
\begin{align}
M_\gamma(\Sigma)
=
\frac{1}{1+\gamma/({n-1})}
\left(
\frac{n-1}{K}\sum_{k=1}^K
\frac{y_k y_k^\top}{y_k^\top \Sigma^{-1} y_k}
+
\frac{\gamma}{\mathrm{tr}(\Sigma^{-1})}I
\right).
\end{align}
Then $M_\gamma$ is continuous, order-preserving, positively homogeneous of degree one, and strongly positive. Consequently, there exist $\Sigma_\star \succ 0$ and $\alpha_\star>0$ such that $M_\gamma(\Sigma_\star)=\alpha_\star \Sigma_\star$. Moreover, the trace-normalized matrix $\widehat\Sigma_\star \triangleq \frac{n-1}{\mathrm{tr}(\Sigma_\star)}\Sigma_\star$ satisfies $\mathrm{tr}(\widehat\Sigma_\star)={n-1}$ and is a fixed point of the normalized mapping:
\begin{align}
\widehat M_\gamma(\Sigma)
\triangleq
\frac{(n-1)\,M_\gamma(\Sigma)}{\mathrm{tr}(M_\gamma(\Sigma))},
\end{align}
i.e., $\widehat M_\gamma(\widehat\Sigma_\star)=\widehat\Sigma_\star$.
\end{theorem}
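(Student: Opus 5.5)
The proof splits into two parts: first the four structural properties of $M_\gamma$, then existence of a positive eigenvector via a nonlinear Perron--Frobenius argument on the cone $\mathbb S_{+}^{n-1}$ ordered by the Loewner order.

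\emph{Structural properties.}

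\textbf{Continuity.} On $\mathbb S_{++}^{n-1}$ the map $\Sigma\mapsto\Sigma^{-1}$ is continuous. The denominators are positive: $y_k^\top\Sigma^{-1}y_k>0$ whenever $y_k\neq 0$, and terms with $y_k=0$ can be discarded. Also $\mathrm{tr}(\Sigma^{-1})>0$.

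\textbf{Order preservation.} If $\Sigma_1\preceq\Sigma_2$, then $\Sigma_2^{-1}\preceq\Sigma_1^{-1}$. Hence $y_k^\top\Sigma_1^{-1}y_k\ge y_k^\top\Sigma_2^{-1}y_k$ and $\mathrm{tr}(\Sigma_1^{-1})\ge \mathrm{tr}(\Sigma_2^{-1})$. Every coefficient multiplying the PSD matrices $y_ky_k^\top$ and $I$ therefore increases from $\Sigma_1$ to $\Sigma_2$, which gives $M_\gamma(\Sigma_1)\preceq M_\gamma(\Sigma_2)$.

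\textbf{Homogeneity.} Since $(\alpha\Sigma)^{-1}=\alpha^{-1}\Sigma^{-1}$, every denominator scales by $\alpha^{-1}$. It follows that $M_\gamma(\alpha\Sigma)=\alpha M_\gamma(\Sigma)$.

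\textbf{Strong positivity.} Each coefficient $1/(y_k^\top\Sigma^{-1}y_k)$ is at least $1/(\lambda_{\max}(\Sigma^{-1})\|y_k\|^2)>0$. Therefore $M_\gamma(\Sigma)\succeq c\sum_k y_ky_k^\top\succ 0$ for some $c>0$, and this holds even when $\gamma=0$. So $M_\gamma$ maps $\mathbb S_{++}^{n-1}$ into itself; when $\gamma>0$ the $I$ term gives positivity directly.

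\emph{Existence of an eigenvector.} I would use Brouwer's fixed point theorem on the normalized map $\widehat M_\gamma$, restricted to a compact convex set. The natural first choice is $D=\{\Sigma\succeq 0:\mathrm{tr}\,\Sigma=n-1\}$. The difficulty is that $M_\gamma$ is defined only on the interior, and its denominators blow up as $\Sigma$ approaches the boundary. I therefore plan to restrict to a truncated set
\[
D_\delta=\{\Sigma\in D:\Sigma\succeq\delta I\}
\]
and show that $\widehat M_\gamma$ maps $D_\delta$ into itself for small $\delta$. This requires a uniform lower bound on $\lambda_{\min}(\widehat M_\gamma(\Sigma))$.

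\textbf{Upper bound on the trace.} Using $y_k^\top\Sigma^{-1}y_k\ge\|y_k\|^2/\lambda_{\max}(\Sigma)\ge \|y_k\|^2/(n-1)$, each rank-one term contributes at most $n-1$ to the trace. The $I$ term is bounded because $\mathrm{tr}(\Sigma^{-1})\ge 1$ on $D$.

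\textbf{Lower bound on the smallest eigenvalue.} Again using $y_k^\top\Sigma^{-1}y_k\le\|y_k\|^2/\lambda_{\min}(\Sigma)$, we get
\[
M_\gamma(\Sigma)\succeq c\,\lambda_{\min}(\Sigma)\sum_k \frac{y_ky_k^\top}{\|y_k\|^2}\succeq c'\lambda_{\min}(\Sigma)I .
\]
After normalization, $\lambda_{\min}(\widehat M_\gamma(\Sigma))\ge c''\lambda_{\min}(\Sigma)$.

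\textbf{Main obstacle.} This bound is linear in $\lambda_{\min}(\Sigma)$, so it does not by itself keep $\widehat M_\gamma$ inside $D_\delta$ unless $c''\ge1$. This is the hardest step. My first attempt would be to prove $c''\ge 1$ with an argument in the spirit of Tyler's: at the direction $v$ attaining $\lambda_{\min}(\Sigma)$, compare the weights and use $\sum_k y_ky_k^\top\succ0$.

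\textbf{Fallback.} If that fails, I would switch to the Hilbert projective metric. Positively homogeneous, order-preserving self-maps of the interior of a cone are nonexpansive in this metric, so I would appeal to the nonlinear Perron--Frobenius/Krein--Rutman theorem for such maps (Nussbaum, Lemmens--Nussbaum). In that framework, strong positivity, meaning that $M_\gamma$ maps boundary points of the cone into the interior, yields an eigenvector $\Sigma_\star\succ0$ with eigenvalue $\alpha_\star>0$. To use this I would extend $M_\gamma$ continuously to nonzero boundary points, for example through the $\gamma$ term, or by a perturbation $\gamma\downarrow0$ argument combined with compactness.

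\emph{Normalization.} With $M_\gamma(\Sigma_\star)=\alpha_\star\Sigma_\star$ in hand, set $\widehat\Sigma_\star=\frac{n-1}{\mathrm{tr}\Sigma_\star}\Sigma_\star$. By homogeneity,
\[
M_\gamma(\widehat\Sigma_\star)=\alpha_\star\widehat\Sigma_\star ,
\]
so $\mathrm{tr}\,M_\gamma(\widehat\Sigma_\star)=\alpha_\star(n-1)$. Substituting into the definition gives $\widehat M_\gamma(\widehat\Sigma_\star)=\widehat\Sigma_\star$, and $\mathrm{tr}(\widehat\Sigma_\star)=n-1$ holds by construction.
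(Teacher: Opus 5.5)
\textbf{Verdict: there is a genuine gap, and it sits in the existence step.} Your four structural properties and the closing normalization are correct and match the paper. You correctly isolate the obstacle, namely that nothing forces $c''\ge 1$, but you then only list fallbacks. None of them can succeed, because with $\sum_k y_ky_k^\top\succ0$ as the only hypothesis the conclusion is false.

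\textbf{Counterexample.} Take $n-1=2$, $K=3$, $y_1=y_2=e_1$, $y_3=e_2$, so that $\sum_k y_ky_k^\top=\mathrm{diag}(2,1)\succ0$. For every $\Sigma\succ0$ the matrix $M_\gamma(\Sigma)$ is diagonal, so any eigenvector has the form $\Sigma=\mathrm{diag}(a,b)$ with $a,b>0$. The equation $M_\gamma(\Sigma)=\alpha\Sigma$ then reads
\[
\bigl(1+\tfrac{\gamma}{2}\bigr)\alpha=\tfrac43+\tfrac{\gamma b}{a+b}=\tfrac23+\tfrac{\gamma a}{a+b},
\]
which gives $\tfrac{a-b}{a+b}=\tfrac{2}{3\gamma}$, and the contradiction $\tfrac23=0$ when $\gamma=0$. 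Since $|a-b|<a+b$, there is no eigenvector in $\mathbb S_{++}^{2}$ for any $\gamma\in[0,\tfrac23]$. Applying $\mathrm{tr}(\Sigma^{-1}\,\cdot\,)$ to $M_\gamma(\Sigma)=\alpha\Sigma$ always forces $\alpha=1$, so the claim is exactly existence of the regularized Tyler estimator.

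For $\gamma=0$, $M_0$ acts on diagonal matrices as the linear map $\mathrm{diag}(a,b)\mapsto\mathrm{diag}(\tfrac43a,\tfrac23b)$. This map has all four of your properties and is nonexpansive in Hilbert's metric, yet it has no interior eigenvector. Its normalized iterates drift to the boundary. Hence no $D_\delta$ is invariant, and no Tyler-style comparison can give $c''\ge1$; Tyler-type existence proofs need a condition on how many $y_k$ lie in proper subspaces at exactly this point.

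\textbf{Why the fallback fails, and how this relates to the paper.} Your fallback is essentially what the paper's proof does. It verifies the same four properties, with strong positivity meaning only that $\mathbb S_{++}^{n-1}$ is mapped into itself, and then cites nonlinear Perron--Frobenius theory. That citation does not produce an interior eigenvector. The relevant theorems need more, for instance a continuous extension to the closed cone that sends every nonzero boundary point into the interior. Here none of your suggested repairs supplies this:
\begin{itemize}
\item $\gamma/\mathrm{tr}(\Sigma^{-1})\to0$ at the boundary, so the $\gamma$-term gives no boundary positivity.
\item The rank-one terms do not even extend continuously, since the limit of $y^\top\Sigma^{-1}y$ depends on how $\Sigma$ approaches the boundary.
\item A $\gamma\downarrow0$ compactness argument only yields boundary limits. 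In the example, the fixed points for $\gamma\downarrow\tfrac23$ converge to $\mathrm{diag}(2,0)$.
\end{itemize}

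\textbf{What is missing.} The statement needs an extra hypothesis of Kent--Tyler type. Suppose a proper subspace $V$ of dimension $d$ contains $N_V$ of the $y_k$. Along $\Sigma_\epsilon=P_V+\epsilon P_{V^\perp}$ one computes
\[
J(\Sigma_\epsilon)=\Bigl(\tfrac{(n-1)N_V}{K}-d\bigl(1+\tfrac{\gamma}{n-1}\bigr)\Bigr)\log\epsilon+O(1).
\]
So boundedness of $J$ requires $\tfrac{N_V}{K}<\tfrac{d}{n-1}\bigl(1+\tfrac{\gamma}{n-1}\bigr)$ for every such $V$; in the example this is exactly $\gamma>\tfrac23$. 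Under such an assumption, the standard route is:
\begin{itemize}
\item show that the scale-invariant objective $J$ is coercive on $\{\mathrm{tr}\,\Sigma=n-1\}$;
\item take a minimizer;
\item note that stationarity of $J$ is exactly $M_\gamma(\Sigma)=\Sigma$, which is the CCCP optimality condition derived in the paper.
\end{itemize}
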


\begin{proof}
Firstly, continuity follows from continuity of $\Sigma \mapsto \Sigma^{-1}$ on $\mathbb S_{++}^{n-1}$. Secondly, for $\alpha>0$, using $(\alpha\Sigma)^{-1}=\alpha^{-1}\Sigma^{-1}$ gives:
\begin{align}
M_\gamma(\alpha\Sigma)
&=
\frac{1}{1+\gamma/({n-1})}
(
\alpha
\frac{n-1}{K}\sum_{k=1}^K
\frac{y_k y_k^\top}{y_k^\top \Sigma^{-1} y_k}   \notag \\
&+
\alpha
\frac{\gamma}{\mathrm{tr}(\Sigma^{-1})}I
)
=
\alpha M_\gamma(\Sigma),
\end{align}
so $M_\gamma$ is positively homogeneous. Meanwhile, if $\Sigma_1 \preceq \Sigma_2$, then $\Sigma_1^{-1}\succeq \Sigma_2^{-1}$, which implies:
\begin{align}
\frac{y_k y_k^\top}{y_k^\top \Sigma_1^{-1} y_k}
\preceq
\frac{y_k y_k^\top}{y_k^\top \Sigma_2^{-1} y_k},
\end{align}
for all $k$. Moreover, $\mathrm{tr}(\Sigma_1^{-1}) \ge \mathrm{tr}(\Sigma_2^{-1})$, which yields:
\begin{align}
\frac{1}{\mathrm{tr}(\Sigma_1^{-1})}I
\preceq
\frac{1}{\mathrm{tr}(\Sigma_2^{-1})}I .
\end{align}
Hence $M_\gamma(\Sigma_1)\preceq M_\gamma(\Sigma_2)$ and $T_\gamma$ is order-preserving. Additionally, since $y_k^\top \Sigma^{-1} y_k>0$ and $I\succ0$, the weighted sum defining $M_\gamma(\Sigma)$ is positive definite, implying strong positivity.

Finally, according to Nonlinear Perron--Frobenius (NPF) theory \cite{tan2015wireless, lemmens2012nonlinear}, there exist $\Sigma_\star\succ0$ and $\alpha_\star>0$ such that $M_\gamma(\Sigma_\star)=\alpha_\star\Sigma_\star$. Let $\widehat\Sigma_\star \triangleq \frac{n-1}{\mathrm{tr}(\Sigma_\star)}\Sigma_\star$ and define:
\begin{align}
\widehat M_\gamma(\Sigma)
=
\frac{(n-1)\,M_\gamma(\Sigma)}{\mathrm{tr}(M_\gamma(\Sigma))}.
\end{align}
By homogeneity, we can conclude $\widehat M_\gamma(\widehat\Sigma_\star)=\widehat\Sigma_\star$.
\end{proof}

\vspace{1mm}
\subsubsection{Shrinkage-Stabilized CCCP Iteration}
Classical Tyler iterations are well defined when $K \ge n-1$, but may become ill-posed or unstable in undersampled regimes ($K<n-1$)~\cite{wiesel2011unified}. While the regularized objective~\eqref{prob:lrmp-regu-mle-Sigma-objective} improves spectral stability, additional shrinkage can further enhances robustness. Inspired by~\cite{chen2011robust}, we incorporate an isotropic shrinkage step. Given $\Sigma_t$, we first compute a stabilized update:
\begin{align}
\label{eq:mle-shrinkage-tyler}
F_\gamma(\Sigma_t)
=
&\frac{1}{1+\gamma/({n-1})}
\bigl(
\frac{n-1}{K}\sum_{k=1}^K\frac{y_k y_k^\top}{y_k^\top \Sigma_t^{-1} y_k + \varepsilon} \notag \\
&+
\frac{\gamma}{\mathrm{tr}(\Sigma_t^{-1})}I
\bigr),
\end{align}
where $\varepsilon>0$ is a small safeguard parameter for numerical stability. Shrinkage is then applied to the iterate:
\begin{align}
\label{eq:mle-shrinkage-sigma-iteration}
\widetilde{\Sigma}_{t+1}
=
(1-\rho)F_\gamma(\Sigma_t)
+
\rho I ,
\end{align}
where $\rho\in[0,1]$ is a shrinkage parameter used only for stabilization, while the primary spectral regularization is controlled by $\gamma$ in~\eqref{prob:lrmp-regu-mle-Sigma-objective}. In practice, $\rho$ is set to zero or a very small value when $K\ge {n-1}$, and activated when $K<{n-1}$.

Finally, trace normalization is applied:
\begin{align}
\label{eq:mle-shrinkage-sigma-normalization}
\Sigma_{t+1}
=
\frac{\widetilde{\Sigma}_{t+1}}
{\mathrm{tr}(\widetilde{\Sigma}_{t+1})/({n-1})}.
\end{align}
When $\rho=0$ and $\varepsilon=0$, the iteration reduces to the regularized Tyler update~\eqref{eq:cccp-regu-mle-sigma-iteration}, whose fixed point follows from NPF theory (Theorem~\ref{the:npf-tyler}). For $\rho\in(0,1]$, although shrinkage breaks positive homogeneity of the mapping, a fixed point still exists as shown in Theorem~\ref{the:fixed-point-shrinkage-tyler}.

\begin{figure}[!t]
    \centering
    \setlength{\tabcolsep}{2pt}
    \renewcommand{\arraystretch}{1.0}
    \subfloat[\footnotesize Eigenvalues]{%
        \includegraphics[width=0.24\textwidth]{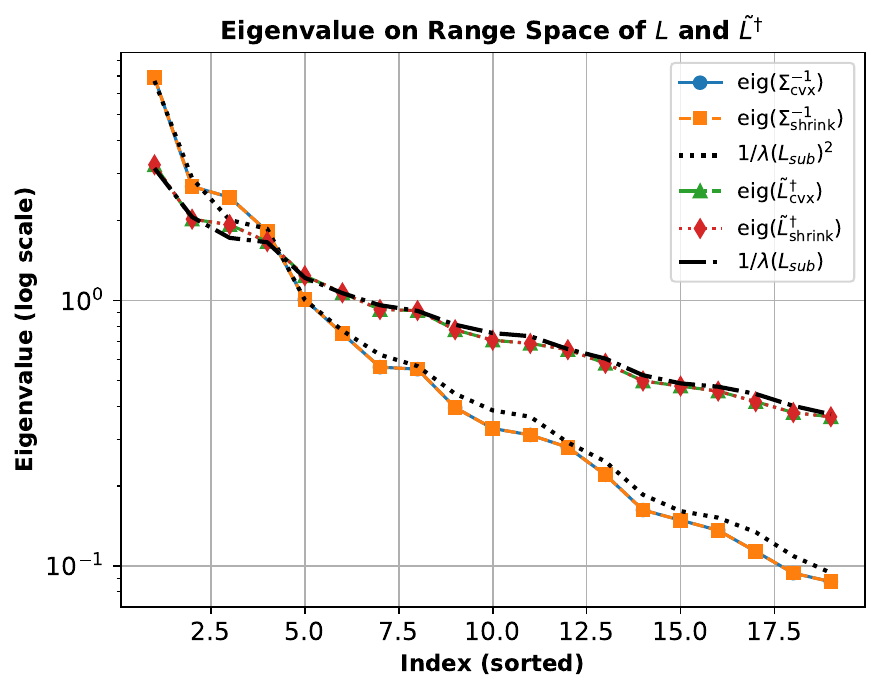}%
        \label{fig:phase1_eigenvalues}
    }%
    \subfloat[\footnotesize Spectral Alignment]{%
        \includegraphics[width=0.23\textwidth]{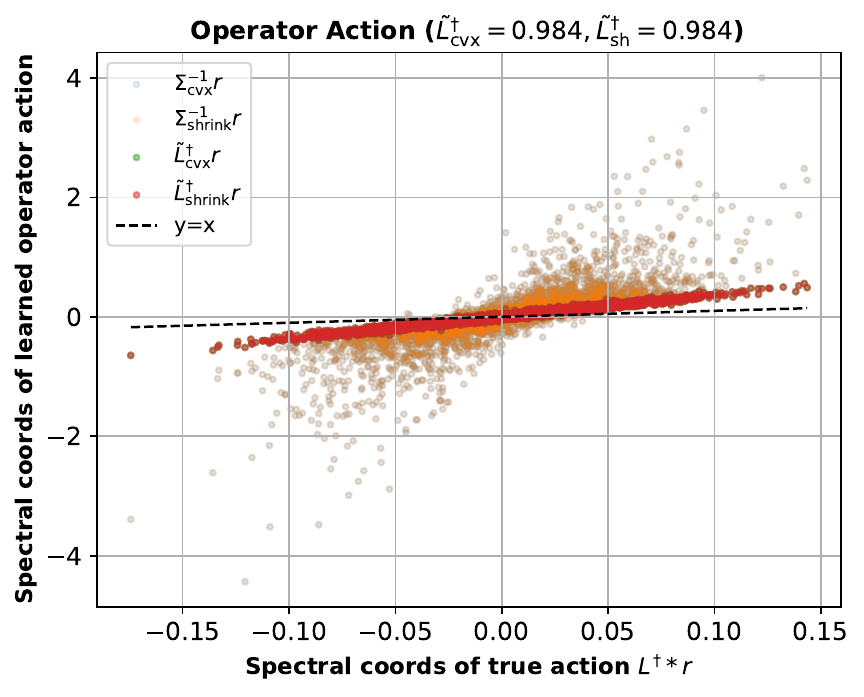}%
        \label{fig:phase1_spectral}
    }
    \caption{MLE on $\mathcal R(L)$. (a) $\Sigma^{-1}$ follows the theoretical eigenvalue scaling of $(L^2)^{\dagger}$ on $\mathcal R(L)$. (b): after $\tilde{L}^{\dagger}=\sqrt{\Sigma^{-1}}$, $\tilde{L}^{\dagger}$ aligns with $L^\dagger$ in spectral coordinates, confirming spectrum approximation of $L^{\dagger}$.}
    \label{fig:laplacian-mle-tyler-and-shrinkage-tyler}
\end{figure}

\begin{theorem}%[Existence of a fixed point for shrinkage-stabilized Tyler iteration]
\label{the:fixed-point-shrinkage-tyler}
Let $\varepsilon>0$, $\rho\in(0,1]$, and $\gamma\ge0$. Assume $\sum_{k=1}^K y_k y_k^\top \succ 0$ and define:
\begin{align}
F_\gamma(\Sigma)
&=
\frac{1}{1+\gamma/({n-1})}
(
\frac{n-1}{K}\sum_{k=1}^K
\frac{y_k y_k^\top}{y_k^\top \Sigma^{-1}y_k+\varepsilon}
+
\frac{\gamma}{\mathrm{tr}(\Sigma^{-1})}I
),\\
\widetilde M(\Sigma)
&=
(1-\rho)F_\gamma(\Sigma)+\rho I,\\
M(\Sigma)
&=
\frac{({n-1})\,\widetilde M(\Sigma)}
{\mathrm{tr}(\widetilde M(\Sigma))}.
\end{align}

Then the mapping $M$ admits at least one fixed point $\Sigma_\star\succ0$ with $\mathrm{tr}(\Sigma_\star)={n-1}$ such that $M(\Sigma_\star)=\Sigma_\star$.
\end{theorem}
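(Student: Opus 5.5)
We prove the existence of a fixed point with Brouwer's fixed-point theorem applied to $M$ on a compact convex set of trace-normalized matrices. This replaces the nonlinear Perron--Frobenius argument of Theorem~\ref{the:npf-tyler}, which needs positive homogeneity, and shrinkage destroys that homogeneity. Set $d=n-1$. The key observation is that the shrinkage term $\rho I$, together with $\varepsilon>0$, gives uniform two-sided spectral bounds on $M(\Sigma)$ that do not depend on $\Sigma$.

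\textbf{Step 1 (well-definedness and continuity).} For $\Sigma\succ0$, each weight $1/(y_k^\top\Sigma^{-1}y_k+\varepsilon)$ is finite, positive and continuous in $\Sigma$. The term $\gamma/\mathrm{tr}(\Sigma^{-1})$ is also continuous. Hence $F_\gamma(\Sigma)\succeq0$ and $\widetilde M(\Sigma)\succeq \rho I\succ0$. In particular $\mathrm{tr}(\widetilde M(\Sigma))\ge \rho d>0$, so $M$ is continuous on $\mathbb S_{++}^{d}$, maps into $\mathbb S_{++}^{d}$, and satisfies $\mathrm{tr}(M(\Sigma))=d$.

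\textbf{Step 2 (uniform bounds).} The weights satisfy $1/(y_k^\top\Sigma^{-1}y_k+\varepsilon)\le 1/\varepsilon$. The regularization term satisfies $1/\mathrm{tr}(\Sigma^{-1})\le \lambda_{\max}(\Sigma)/d$. On the set $\{\mathrm{tr}(\Sigma)=d\}$ this gives $\lambda_{\max}(\Sigma)\le d$, hence $\gamma/\mathrm{tr}(\Sigma^{-1})\le \gamma$. It follows that
\begin{equation*}
\widetilde M(\Sigma)\preceq (1-\rho)\tfrac{1}{1+\gamma/d}\Big(\tfrac{d}{K\varepsilon}\textstyle\sum_k y_ky_k^\top+\gamma I\Big)+\rho I \preceq \beta I
\end{equation*}
for an explicit constant $\beta$. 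Combining this with $\widetilde M(\Sigma)\succeq\rho I$ yields $\mathrm{tr}(\widetilde M(\Sigma))\in[\rho d,\beta d]$. Therefore
\begin{equation*}
\tfrac{\rho}{\beta}I\preceq M(\Sigma)\preceq \tfrac{\beta}{\rho}I .
\end{equation*}
If $\gamma$ is large enough that this upper bound exceeds what the trace permits, we simply use $M(\Sigma)\preceq dI$, which always holds because $\mathrm{tr}(M(\Sigma))=d$. The lower bound $\rho/\beta$ is the essential one. The assumption $\sum_k y_ky_k^\top\succ0$ is not even needed for this step; the shrinkage supplies the positivity.

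\textbf{Step 3 (invariant compact convex set and Brouwer).} Define
\begin{equation*}
\mathcal C=\{\Sigma\in\mathbb S^{d}: \mathrm{tr}(\Sigma)=d,\ \Sigma\succeq \delta I\},\qquad \delta=\rho/\beta .
\end{equation*}
This set is convex, being an intersection of an affine set with a translated PSD cone. It is closed, and it is bounded because $\Sigma\succeq0$ with trace $d$ forces $\|\Sigma\|\le d$. It is nonempty, since $I\in\mathcal C$ when $\delta\le1$, which holds because $\beta\ge\rho$. Every $\Sigma\in\mathcal C$ is positive definite, so $M$ is continuous on $\mathcal C$, and by Step 2 it maps $\mathcal C$ into $\mathcal C$. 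Brouwer's theorem, applied to this compact convex subset of a finite-dimensional space, then gives $\Sigma_\star\in\mathcal C$ with $M(\Sigma_\star)=\Sigma_\star$. This fixed point satisfies $\Sigma_\star\succeq\delta I\succ0$ and $\mathrm{tr}(\Sigma_\star)=d=n-1$, as claimed.

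\textbf{Main obstacle.} The delicate point is Step 2, specifically obtaining a lower bound on $M(\Sigma)$ that is uniform over $\mathcal C$ and keeps $\mathcal C$ away from the boundary of the cone. This requires an upper bound on $\mathrm{tr}(\widetilde M(\Sigma))$ that holds uniformly even as $\Sigma$ approaches singularity inside the trace slice. The $\varepsilon$-safeguard handles the data term, and the inequality $1/\mathrm{tr}(\Sigma^{-1})\le\lambda_{\max}(\Sigma)/d\le 1$ handles the $\gamma$ term. Once this bound is in place, the remaining steps are routine.
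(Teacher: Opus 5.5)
Your proof is correct and uses the same tool as the paper, Brouwer's theorem on the trace slice $\mathrm{tr}(\Sigma)=n-1$. The difference is the domain, and it matters.

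\textbf{The gap in the paper's version.} The paper applies Brouwer to $\mathcal S=\{\Sigma\succeq0:\mathrm{tr}(\Sigma)=n-1\}$. It deduces continuity of $M$ on $\mathcal S$ from continuity on $\mathbb S_{++}^{n-1}$. But $\mathcal S$ contains singular matrices, where $\Sigma^{-1}$ and hence $M$ are undefined. In general $M$ does not even extend continuously to them, because $y^\top\Sigma^{-1}y$ has path-dependent limits. For example, $e_1^\top\Sigma^{-1}e_1\to1/2$ along $\mathrm{diag}(2-t,t)$ as $t\downarrow0$. It tends to $1$ along suitably correlated positive definite paths with the same limit $\mathrm{diag}(2,0)$.

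\textbf{How your Step~2 closes it.} You bound the data weights by $1/\varepsilon$. You bound the $\gamma$-term via $1/\mathrm{tr}(\Sigma^{-1})\le\lambda_{\max}(\Sigma)/d\le1$. Together these cap $\mathrm{tr}(\widetilde M(\Sigma))$ uniformly, which gives $M(\Sigma)\succeq(\rho/\beta)I$. This produces an invariant compact convex set $\mathcal C$ lying strictly inside the open cone, which is exactly what Brouwer needs.

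Your route costs one extra estimate but closes a gap the paper's argument leaves open. Your remark that the hypothesis $\sum_k y_ky_k^\top\succ0$ is unnecessary is also correct, since the shrinkage term alone supplies positivity.
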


\begin{proof}
Let $\mathcal S = \{\Sigma\succeq0:\mathrm{tr}(\Sigma)={n-1}\}$, and obviously, it is convex and compact. For any $\Sigma\succ0$, there exists $y_k^\top\Sigma^{-1}y_k+\varepsilon>0$, hence $F_\gamma(\Sigma)$ is well defined and bounded. Moreover, since $\rho\in(0,1]$:
\begin{align}
\widetilde M(\Sigma)
=
(1-\rho)F_\gamma(\Sigma)+\rho I
\succeq
\rho I .
\end{align}
Thus $\widetilde M(\Sigma)\succ0$. After trace normalization:
\begin{align}
M(\Sigma)
=
\frac{({n-1})\,\widetilde M(\Sigma)}
{\mathrm{tr}(\widetilde M(\Sigma))}
\succ0,
\qquad
\mathrm{tr}(M(\Sigma))={n-1},
\end{align}
which implies $M(\mathcal S)\subseteq\mathcal S$. Furthermore, $M$ is continuous on $\mathcal S$ since all operations involved in $F_\gamma(\Sigma)$ and $\widetilde M(\Sigma)$ are continuous on $\mathbb S_{++}^{n-1}$. Therefore, by Brouwer's fixed-point theorem \cite{granas2003fixed}, there exists $\Sigma_\star\in\mathcal S$ such that $M(\Sigma_\star)=\Sigma_\star$.
\end{proof}

\begin{example}
We evaluate the shrinkage-regularized CCCP iteration \eqref{eq:mle-shrinkage-tyler}--\eqref{eq:mle-shrinkage-sigma-normalization} for \eqref{prob:lrmp-regu-mle-Sigma-objective} with $\mathrm{tr}(\Sigma)=n-1$ on a connected random graph ($n=20$). We generate $K=10n=200$ samples $r_k=L z_k$, normalize $\bar r_k=r_k/\|r_k\|_2$, and obtain intrinsic coordinates $\{y_k\}_{k=1}^K$ on $\mathcal R(L)$. A CVXPY-based MM solver is used as a benchmark. Since $K\ge n-1$, the method reduces to a stabilized Tyler iteration ($\rho=10^{-6}$) and converges to the same stationary point. The resulting $\Sigma^{-1}$ closely matches the benchmark (normalized Frobenius inner product $1.0000$, relative gap $9.7\times10^{-5}$). As shown in Fig.~\ref{fig:laplacian-mle-tyler-and-shrinkage-tyler}, $\Sigma^{-1}$ aligns with $(L^2)^{-1}$ on $\mathcal R(L)$ (similarity $\approx0.987$), and $\tilde{L}^{\dagger}=\sqrt{\Sigma^{-1}}$ matches $L^{\dagger}$ (similarity $\approx0.992$), consistent with theory (cf. Fig.~\ref{fig:phase1_eigenvalues}). Spectral tests (cf. Fig.~\ref{fig:phase1_spectral}) further show strong agreement with $L^{\dagger}r$ ($\approx0.984$), confirming accurate recovery on $\mathcal R(L)$.
\end{example}

\subsection{Dual-Driven Differentiable Primal Reconstruction}
\label{subsec:phase2}
In the second phase, the learned $\tilde{L}^{\dagger}$ is used to reconstruct a primal solution through a differentiable dual-driven parameterization learning progress. From the primal solution recovery formulation~\eqref{eq:primal-solution-recovery-from-kkt}, we combine auxiliary dual variables $\lambda\in\mathbb{R}^m$, $\mu\in\mathbb{R}_+^n$, and a scalar bias $c\in\mathbb{R}$, and define the affine reconstruction map:
\begin{equation}
\label{eq:dual-driven-x}
x(\lambda,\mu,c)
=
\tilde{L}^{\dagger}(\mu-A^\top\lambda)+c\mathbf{1},
\end{equation}
which is linear in $(\lambda,\mu,c)$ and therefore fully differentiable.

To remove the nonnegativity constraint $\mu\succeq0$, we adopt a smooth reparameterization $\mu=\phi(\tilde{\mu})$ with $\tilde{\mu}\in\mathbb{R}^n$, where $\phi(\cdot)$ is the softplus function. This converts the constrained dual variables into unconstrained parameters and preserves end-to-end differentiability. With the $\tilde{L}^{\dagger}$ fixed from previous phase, we learn auxiliary dual variables that induce a primal candidate matching a high-accuracy reference solution $x^\star_{\mathrm{cvxpy}}$. Specifically, we solve the smooth differentiable training objective:
\begin{equation}
\label{eq:distill-objective}
\min_{\lambda,\tilde{\mu},c}
\;
\mathcal{L}(\lambda,\tilde{\mu},c)
=
\left\|
x(\lambda,\phi(\tilde{\mu}),c)
-
x^\star_{\mathrm{cvxpy}}
\right\|_2^2 .
\end{equation}

During training, the nonnegativity projection on $x(\lambda,\mu,c)$ is omitted to preserve a fully differentiable computation graph. The variables $(\lambda,\tilde{\mu},c)$ are jointly optimized under the smooth loss, where $\mu=\phi(\tilde{\mu})$ enforces nonnegativity. After convergence, $(\lambda^\star,\tilde{\mu}^\star)$ are fixed and $\mu^\star=\phi(\tilde{\mu}^\star)$ is obtained. The scalar bias $c$ is then further corrected through a one-dimensional convex optimization that accounts for the final nonnegativity projection.

Finally, $(\lambda^\star,\mu^\star,c^\star)$ are used to reconstruct the primal solution:
\begin{equation}
\label{eq:primal-reconstruction-final}
x_{\mathrm{DCR}}
=
\left[
\tilde{L}^{\dagger}\!\left(\mu^\star-A^\top\lambda^\star\right)
+
c^\star\mathbf{1}
\right]_+,
\end{equation}
where $[\cdot]_+$ denotes the projection onto the nonnegative orthant.

\section{Numerical Experiments}
\subsection{Experimental Setup}
\label{subsec:sim-setup}
We evaluate DCR on the problem \eqref{prob:Learning_LRMP_primal} with $n \in \{50, 100, 200, 500, 700, 1000, 1500, 2000\}$ and $m = 1.5n$. The matrix $A \in \mathbb{R}^{m \times n}$ has column-normalized i.i.d.\ standard Gaussian entries. Ground-truth signals $x_{\mathrm{gt}} \in \mathbb{R}^n$ are $10\%$ sparse with uniform nonzero values, and $b = Ax_{\mathrm{gt}}$. All experiments are implemented in PyTorch with GPU acceleration.

DCR first learns a symmetric approximation $\widetilde{L}^{\dagger} \approx L^{\dagger}$ via fixed-point iterations \eqref{eq:mle-shrinkage-tyler}--\eqref{eq:mle-shrinkage-sigma-normalization} using an adaptive number of random samples:
\begin{equation}
K(n) = \min \Bigl\{ K_{\max},\;
        \max \bigl\{ K_{\min},\; \mathrm{round}(2\sqrt{n}) \bigr\}
       \Bigr\}.
\end{equation}

To improve stability, the shrinkage weight $\rho$ in \eqref{eq:mle-shrinkage-sigma-iteration} is selected adaptively:
\begin{equation}
\rho
=
\mathrm{clip}\;(
  \widehat{\rho}_{\mathrm{plugin}}
  \cdot
  \max\;\{0,\;1-\frac{K}{n-1}\}
  \cdot
  \frac{1}{1+c_{\gamma}\gamma},\;
  \rho_{\min},\;
  \rho_{\max}
),
\end{equation}
where $\widehat{\rho}_{\mathrm{plugin}}$ is a data-driven estimate, $K$ controls the number of samples, $(1+c_{\gamma}\gamma)^{-1}$ attenuates shrinkage as the regularization parameter $\gamma$ increases, and $[\rho_{\min},\rho_{\max}]$ defines the admissible range.

High-accuracy reference solutions are obtained via the convex solver CVXPY \cite{diamond2016cvxpy}. The dual variables $(\lambda, \mu, c)$ are optimized using Adam with Softplus reparameterization ($\mu \succeq 0$), and the primal solution is reconstructed via \eqref{eq:primal-reconstruction-final}. All hyperparameters are listed in Table~\ref{tab:DCR-topo-params}.

\begin{table}[!t]
\caption{Experimental Parameters.}
\label{tab:DCR-topo-params}
\centering
\renewcommand{\arraystretch}{1.1}
\begin{tabularx}{\linewidth}{@{}X l@{}}
\toprule
\textbf{Parameter} & \textbf{Value} \\
\midrule
Problem size $n$
  & $\{50,100,200,500,700,1000,1500,2000\}$ \\
Measurements $m$          & $1.5n$ \\
Laplacian topology        & grid2d, Erdős--Rényi (ER) \\
Grid weight jitter        & $0.2$ \\
Phase~1 iterations        & $\min\{3000,\,500+2(n-1)\}$ \\
Residual samples $K$
  & $\min(K_{\max},\,\max(K_{\min},\lceil2\sqrt{n}\rceil))$ \\
Shrinkage $\rho$          & Adaptive \\
Regularization $(\epsilon,\delta)$ & $(10^{-6},\,10^{-3})$ \\
Reference solver          & CVXPY (OSQP / SCS) \\
Solver tolerance          & $10^{-5}$ \\
Max solver iterations     & $2\times10^5$ \\
Phase~2 iterations        & $30000$ \\
Dual learning rate $\eta_D$ & $5\times10^{-3}$ \\
Softplus temperature $\beta$ & $0.5$ \\
\bottomrule
\end{tabularx}
\end{table}

\vspace{1mm}
\subsubsection{Baselines}
We benchmark DCR against the following methods:
\begin{itemize}
\item CVXPY~\cite{diamond2016cvxpy}: a convex optimization solver that directly solves \eqref{prob:Learning_LRMP_primal} using OSQP or SCS, providing high-accuracy reference solutions.

\item Chebyshev polynomial approximation~\cite{hammond2011wavelets,shuman2018distributed,cheng2025iterative}: the action of the Laplacian pseudoinverse $L^{\dagger}$ on a vector $v$ is approximated via a degree-$K_{cb}$ Chebyshev polynomial expansion of the function $1/\lambda$ over the nonzero spectral interval $[\lambda_2(L), \lambda_{\max}(L)]$, where $\lambda_2(L)$ and $\lambda_{\max}(L)$ denote the second-smallest and largest eigenvalues of $L$, respectively. Specifically, $L^\dagger v \approx \sum_{k=0}^{K_{cb}} \alpha_k\, T_k(\tilde{L})\, v$, where $T_k(\cdot)$ denotes the Chebyshev polynomial and $\tilde{L}$ is the rescaled Laplacian whose spectrum is linearly mapped to $[-1,1]$. This approximation enables a matrix-free implementation of inverse filtering and is integrated into a gradient-based iterative solver for \eqref{prob:Learning_LRMP_primal}.
\end{itemize}

\subsubsection{Evaluation Metrics}
Performance of all methods is quantified using the following metrics:
\begin{itemize}
    \item Solution reconstruction error, including:
     \begin{align}
      \text{Max Abs.\ Err.} &:= \max_i |x_i - x_i^{\mathrm{CVXPY}}|, \label{eq:simu-max-abs-err}\\
      \text{Mean Abs.\ Err.} &:= \frac{1}{n} \sum_i |x_i - x_i^{\mathrm{CVXPY}}|, \label{eq:simu-mean-abs-err}\\
      \text{Rel.\ Sol.\ Err.} &:= \frac{\|x - x_{\mathrm{CVXPY}}\|_2} {\|x_{\mathrm{CVXPY}}\|_2}. \label{eq:simu-rel-sol-err}
    \end{align}

    \item Relative objective value gap:
    \begin{equation}
    \label{eq:simu-rel-obj-gap}
      \text{Rel.\ Obj.\ Gap}
        := \frac{\bigl| Y
                 - Y_{\mathrm{CVXPY}} \bigr|}
               {\bigl| Y_{\mathrm{CVXPY}} \bigr|},
    \end{equation}
    where $Y$ and $Y_{\mathrm{CVXPY}}$ denote the objective values of \eqref{prob:Learning_LRMP_primal} evaluated at the solutions obtained by the compared methods and by CVXPY, respectively.

    \item Runtime (time-to-accuracy): the time required for a method to first satisfy:
    \begin{equation}
    \label{eq:simu-runtime}
      \left|\frac{Y - Y_{\mathrm{CVXPY}}}{Y_{\mathrm{CVXPY}}}\right| \le 10^{-3}.
    \end{equation}
    For DCR, only Phase~2 (online) time is counted; Phase~1 is excluded as it is amortized across instances with the same graph.

    \item Topology robustness: experiments are conducted under both grid2d and Erdős--Rényi (ER) topologies, which exhibit markedly different spectral gaps and conditioning characteristics, to assess sensitivity to graph structure.
\end{itemize}

\subsection{Numerical Results}

\begin{figure*}[!t]
    \centering
    \subfloat[ER topology]{
        \includegraphics[width=0.48\textwidth]{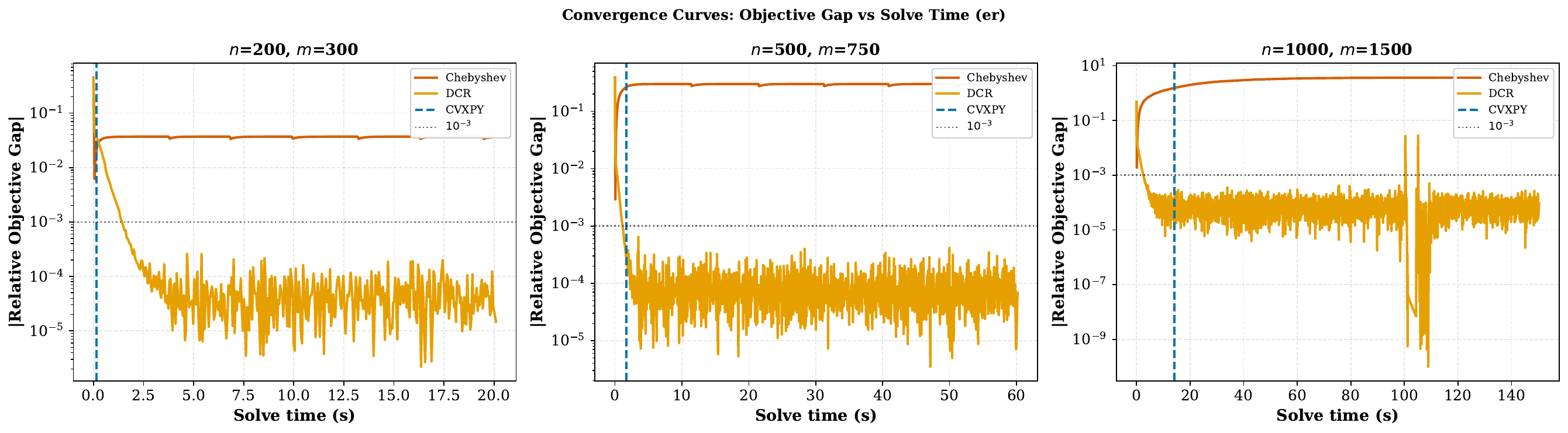}
    }
    \hfill
    \subfloat[grid2d topology]{
        \includegraphics[width=0.48\textwidth]{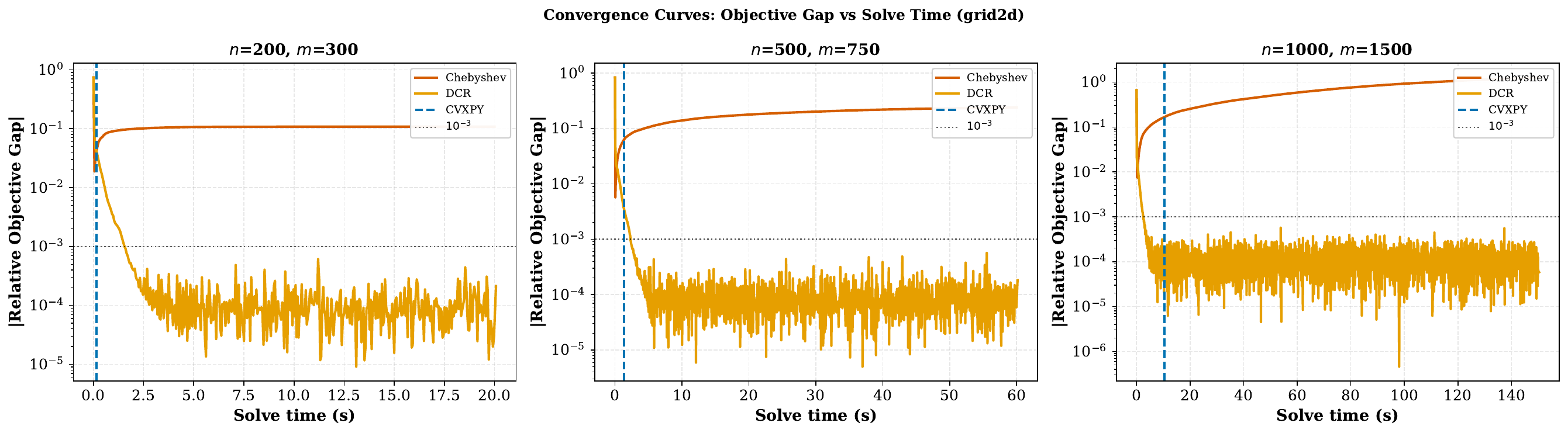}
    }
    \caption{Convergence curves (objective gap vs.\ solve time) for DCR and Chebyshev on the LR-NNLS problem \eqref{prob:Learning_LRMP_primal} under two graph topologies, at $n \in \{200, 500, 1000\}$ with $m=1.5n$. Each subplot shows the absolute relative objective gap \eqref{eq:simu-rel-obj-gap} versus solve time; the vertical dashed line marks the CVXPY solve time; the horizontal dotted line marks the $10^{-3}$ accuracy threshold. DCR consistently achieves rapid convergence to high-accuracy solutions (typically below $10^{-4}$), while Chebyshev exhibits slow convergence and stagnates at significantly higher objective gaps, especially for larger problem sizes.
    }
    \label{fig:conv_curves}
\end{figure*}

\subsubsection{Convergence Behavior}
Fig.~\ref{fig:conv_curves} shows the convergence behavior of DCR and Chebyshev in terms of relative objective gap versus solve time under both grid2d and ER topologies.

\textit{Grid2d Topology.}
Fig.~\ref{fig:conv_curves} (a) shows the convergence results on grid2d graphs. DCR converges rapidly for all problem sizes: for $n=200$, the objective gap drops below $10^{-3}$ within $2.47$\,s; for $n=500$ and $n=1000$, the time-to-accuracy is $2.27$\,s and $2.76$\,s, respectively. After the initial fast decrease, the objective gap stabilizes, staying around $10^{-5}$ to $10^{-3}$ for $n=200$ and $10^{-5}$ to $10^{-4}$ for $n=500$, with small variations due to the Adam optimizer. For $n=1000$, DCR achieves a final relative objective gap of $5.9\times10^{-11}$ and a relative solution error of $3.3\times10^{-9}$, which are close to machine precision. In contrast, Chebyshev shows poor convergence: for $n=200$ and $n=500$, the objective gap increases after the first few iterations and then stabilizes around $10^{-1}$, while for $n=1000$ it remains above $10^{-1}$ throughout the entire time budget. In all cases, Chebyshev fails to reach the $10^{-3}$ threshold, indicating that the fixed polynomial approximation introduces a bias that cannot be reduced by further iterations.

\textit{ER Topology.}
Fig.~\ref{fig:conv_curves} (b) shows that DCR maintains similar convergence behavior on ER graphs despite their different spectral properties. For $n=200$, DCR reaches the $10^{-3}$ threshold at $1.85$\,s and then converges to a gap around $10^{-4}$--$10^{-5}$; for $n=500$, it reaches the threshold at $1.54$\,s and stabilizes at a similar level; for $n=1000$, it crosses the threshold at $3.19$\,s and achieves a final gap of $2.3\times10^{-11}$. The corresponding CVXPY solve times are $0.16$\,s, $1.67$\,s, and $14.14$\,s, showing that DCR remains efficient as the problem size grows. Chebyshev performs even worse on ER graphs: for example, at $n=500$ and $n=700$, the relative solution error reaches $1.20$ and $1.17$, which is significantly larger than the grid2d case ($0.36$ and $0.74$), indicating that the polynomial approximation of $L^\dagger$ degrades under more irregular spectra. In contrast, DCR maintains stable accuracy across both topologies, with final objective gaps on ER graphs ($7.9\times10^{-10}$, $8.2\times10^{-10}$, and $2.3\times10^{-11}$ for $n=500,700,1000$) comparable to those on grid2d, demonstrating that the learned approximation $\widetilde{L}^{\dagger}$ in DCR adapts well to different graph structures without modifying the algorithm.

\begin{figure}[!t]
    \centering
    \subfloat[grid2d topology]{
        \includegraphics[width=0.95\linewidth]{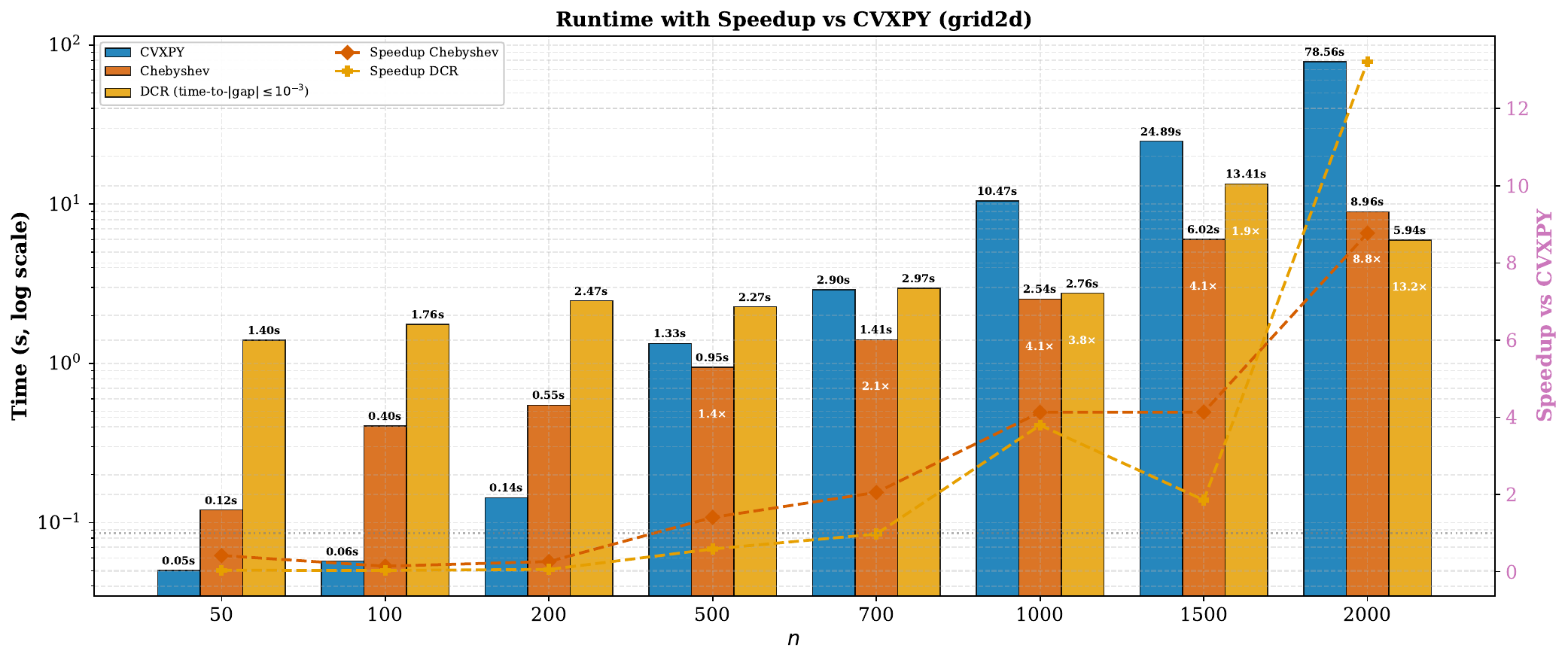}
        \label{fig:runtime_grid}
    }   
    
    \subfloat[ER topology]{
        \includegraphics[width=0.95\linewidth]{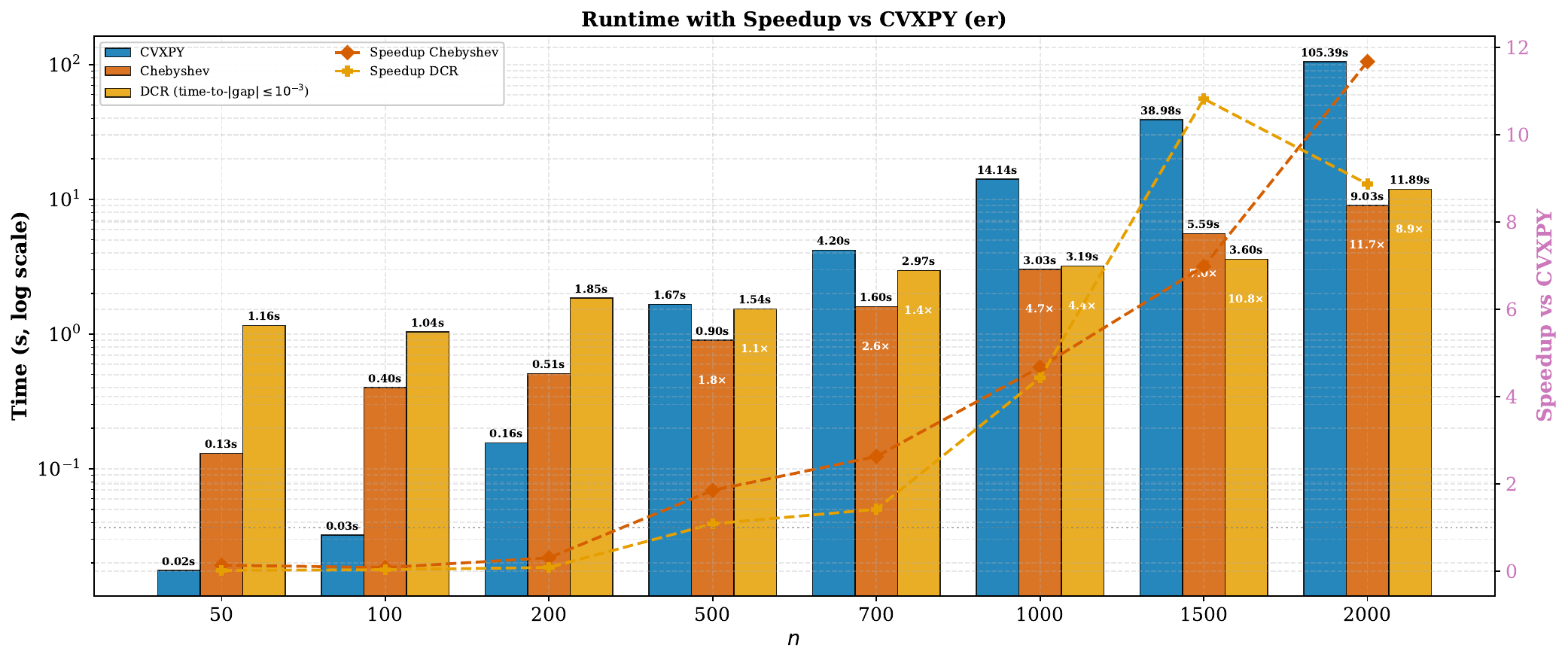}
        \label{fig:runtime_er}
    }
    \caption{
        Runtime and speedup comparison of DCR and Chebyshev relative to CVXPY on the LR-NNLS problem \eqref{prob:Learning_LRMP_primal} for $n \in [50, 2000]$, $m=1.5n$. (a): grid2d topology. (b): ER topology. Bar heights show runtime. White labels inside Chebyshev and DCR bars indicate the speedup ratio relative to CVXPY. DCR runtime is measured as time-to-accuracy \eqref{eq:simu-runtime}. The right axis (dashed lines) shows the speedup ratio of each method relative to CVXPY.
    }
    \label{fig:runtime_speedup}
\end{figure}

\vspace{1mm}
\subsubsection{Runtime and Speedup Analysis}
Fig.~\ref{fig:runtime_speedup} compares the runtime and speedup of DCR and Chebyshev relative to CVXPY under both grid2d and ER topologies.

\textit{Grid2d Topology.}
Fig.~\ref{fig:runtime_speedup} (a) shows the runtime on grid2d graphs. CVXPY's runtime increases rapidly with problem size, from $0.05$\,s at $n=50$ to $78.56$\,s at $n=2000$, indicating poor scaling with problem size. Chebyshev is faster at small sizes (e.g., $0.12$\,s at $n=50$), but its runtime still grows steadily and reaches $8.96$\,s at $n=2000$. Moreover, its speedup over CVXPY remains limited, below $4.1\times$ for $n\le1000$ and peaking at $8.8\times$ at $n=2000$, while its low solution accuracy (as shown in Fig.~\ref{fig:conv_curves} (b)) reduces the practical value of this speed advantage. In contrast, DCR shows stable runtime across a wide range of problem sizes. It requires $1.40$\,s at $n=50$, $2.27$\,s at $n=500$, $2.76$\,s at $n=1000$, and $5.94$\,s at $n=2000$ to reach the target accuracy. As a result, the speedup of DCR increases with $n$, reaching $3.8\times$ at $n=1000$ and $13.2\times$ at $n=2000$. These results show that DCR scales well and becomes more efficient than the convex solver as the problem size grows.

\textit{ER Topology.}
Fig.~\ref{fig:runtime_speedup} (b) shows the runtime on ER graphs. CVXPY becomes more expensive as $n$ increases, rising from $0.02$\,s at $n=50$ to $105.39$\,s at $n=2000$. DCR maintains stable time-to-accuracy across scales, requiring $1.16$\,s at $n=50$, $1.54$\,s at $n=500$, $3.19$\,s at $n=1000$, $3.60$\,s at $n=1500$, and $11.89$\,s at $n=2000$. This leads to increasing speedup over the convex solver, reaching $4.4\times$ at $n=1000$, $10.8\times$ at $n=1500$, and $8.9\times$ at $n=2000$. Chebyshev achieves a higher speedup at the largest size (up to $11.7\times$ at $n=2000$), but this comes at the cost of poor solution quality. As shown in Fig.~\ref{fig:conv_curves} (a), Chebyshev does not meet the $10^{-3}$ accuracy threshold at any scale on ER graphs. Thus, its apparent runtime advantage is limited when solution accuracy is taken into account. DCR achieves consistent and increasing speedup with problem size while maintaining high solution accuracy, offering a better efficiency–accuracy trade-off across different graph structures.

\begin{figure}[!t]
    \centering
    \includegraphics[width=0.95\linewidth]{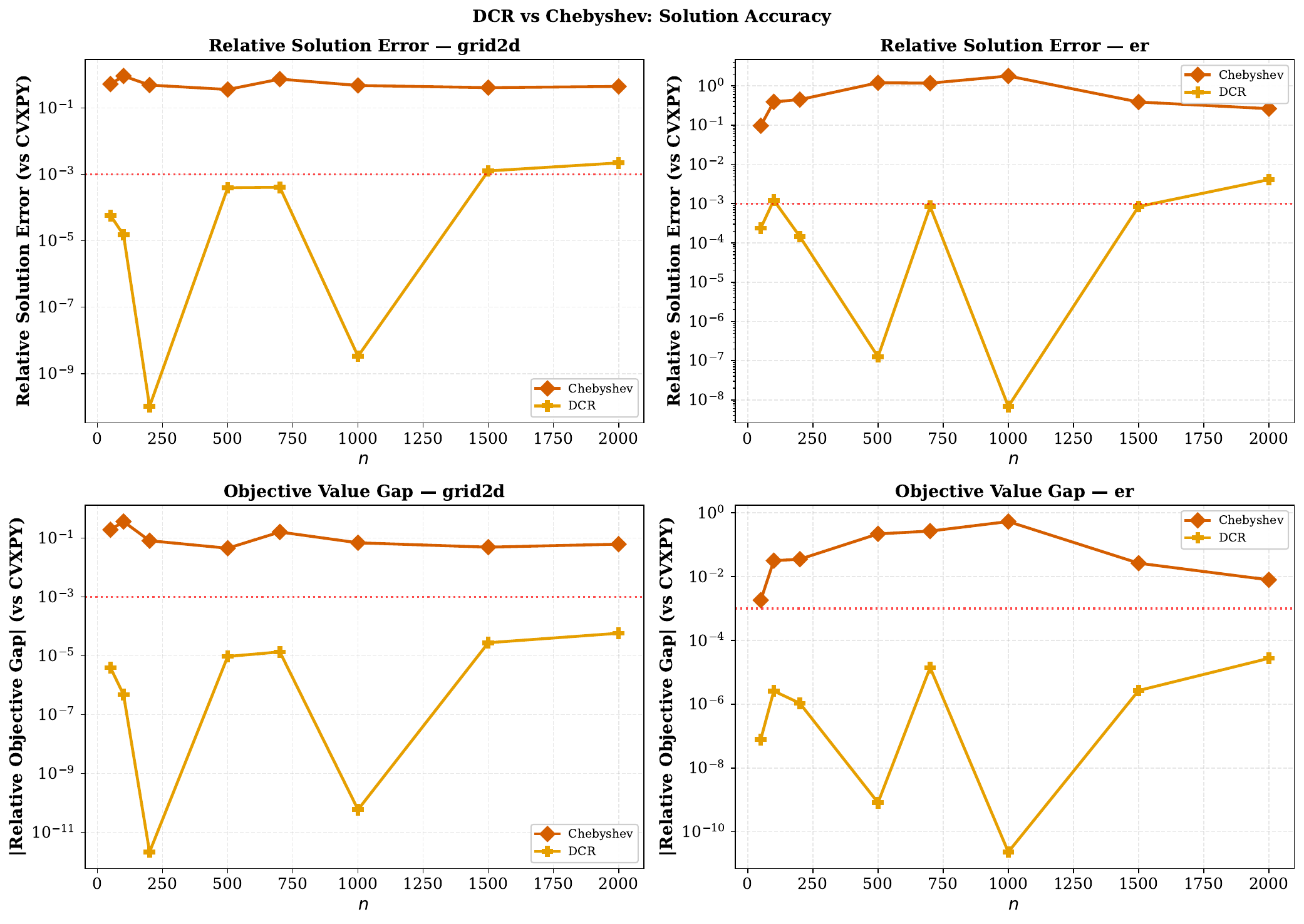}
    \caption{
        Solution accuracy of DCR and Chebyshev on the LR-NNLS problem \eqref{prob:Learning_LRMP_primal} for $n \in [50, 2000]$, $m=1.5n$, under grid2d (left column) and ER (right column) topologies. Top row: relative solution error \eqref{eq:simu-rel-sol-err} versus $n$. Bottom row: relative objective gap \eqref{eq:simu-rel-obj-gap} versus $n$. The red dotted line marks the $10^{-3}$ threshold. All metrics are computed relative to the convex solver reference solution. DCR consistently achieves high accuracy (often below $10^{-5}$), while Chebyshev exhibits large errors and fails to reach the desired accuracy across all problem sizes, especially on ER graphs.
        }
    \label{fig:accuracy}
\end{figure}

\vspace{1mm}
\subsubsection{Solution Accuracy}
Fig.~\ref{fig:accuracy} compares the solution accuracy of DCR and Chebyshev in terms of relative solution error and objective gap with respect to the CVXPY reference.

\textit{Grid2d Topology.}
Fig.~\ref{fig:accuracy} (left column) shows the results on grid2d graphs. DCR consistently achieves high accuracy across all problem sizes, with relative solution error typically below $10^{-3}$ and often reaching $10^{-5}$ to $10^{-10}$. For example, the relative solution error drops to $1.0\times10^{-10}$ at $n=200$ and $3.3\times10^{-9}$ at $n=1000$. The corresponding objective gap is also extremely small, reaching $2.1\times10^{-12}$ at $n=200$ and $5.9\times10^{-11}$ at $n=1000$. In contrast, Chebyshev exhibits large errors across all scales, with relative solution error around $0.3$--$0.9$ and objective gap around $10^{-2}$ to $10^{-1}$, and it never approaches the $10^{-3}$ threshold. This suggests that the fixed polynomial approximation introduces a bias that is difficult to reduce by increasing iterations.

\textit{ER Topology.}
Fig.~\ref{fig:accuracy} (right column) shows similar trends on ER graphs. DCR again maintains high accuracy, with relative solution error typically below $10^{-3}$ and reaching as low as $1.3\times10^{-7}$ at $n=500$ and $6.8\times10^{-9}$ at $n=1000$. The objective gap follows the same pattern, achieving values as small as $8.2\times10^{-10}$ and $2.3\times10^{-11}$. In contrast, Chebyshev performs worse on ER graphs than on grid2d graphs, with relative solution error exceeding $1$ at several scales, reaching $1.20$ at $n=500$ and $1.78$ at $n=1000$, and with objective gap remaining above $10^{-2}$ in most cases. These results suggest that the polynomial approximation of $L^\dagger$ may degrade under more irregular spectra, while DCR remains accurate and stable across different graph structures.

\begin{figure}[!t]
    \centering
    \includegraphics[width=0.95\linewidth]{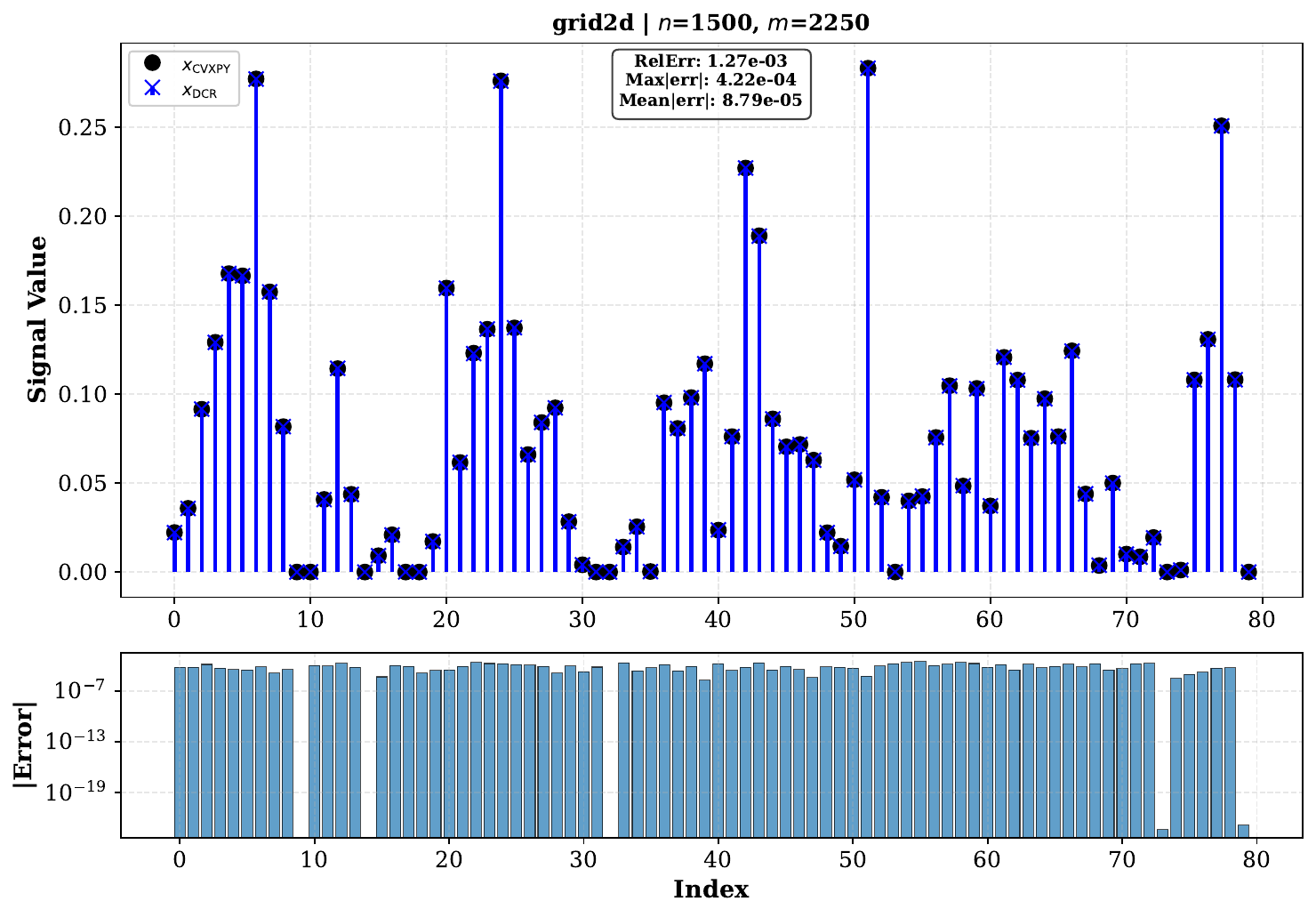}
    \caption{
     Signal reconstruction comparison between DCR and CVXPY on a grid2d instance ($n=1500$, $m=2250$).
     Top: first 80 components of $x_{\mathrm{CVXPY}}$ (black) and $x_{\mathrm{DCR}}$ (blue), which are visually indistinguishable.
     Bottom: elementwise absolute error $|x_i^{\mathrm{DCR}}-x_i^{\mathrm{CVXPY}}|$ (log scale), concentrated around $10^{-7}$--$10^{-6}$.
     Overall statistics: RelErr~$=1.27\times10^{-3}$ \eqref{eq:simu-rel-sol-err}, Max\,$|\mathrm{err}|=4.22\times10^{-4}$ \eqref{eq:simu-max-abs-err}, Mean\,$|\mathrm{err}|=8.79\times10^{-5}$ \eqref{eq:simu-mean-abs-err}.
    }
    \label{fig:signal_compare_n1500}
\end{figure}

\vspace{1mm}
\subsubsection{Signal Reconstruction Accuracy}
Fig.~\ref{fig:signal_compare_n1500} shows a pointwise comparison between $x_{\mathrm{DCR}}$ and the convex solver reference solution for a grid2d instance with $n=1500$ and $m=2250$. The two signals are visually indistinguishable across all displayed components, indicating that DCR accurately recovers both support and magnitude. The element-wise absolute solution error remains uniformly small, typically in the range $10^{-8}$ to $10^{-6}$, with no noticeable outliers or structured deviations. Quantitatively, the relative solution error is $1.27\times10^{-3}$, with maximum and mean absolute solution errors of $4.22\times10^{-4}$ and $8.79\times10^{-5}$, respectively. The relative objective gap is $2.78\times10^{-5}$, confirming that DCR achieves a solution very close to the convex solver reference solution. These results show that DCR provides accurate and stable reconstruction at large problem scale.

{\setlength{\heavyrulewidth}{0.12em}
\begin{table*}[!t]
\caption{Performance Summary of DCR and Chebyshev ($m=1.5n$, CVXPY as reference). DCR runtime is time-to-accuracy. Speedup is relative to the convex solver CVXPY. Best results per row in \textbf{bold}.}
\label{tab:perf_summary}
\centering
\renewcommand{\arraystretch}{1.15}
\setlength{\tabcolsep}{4pt}
\begin{tabular*}{\textwidth}{@{\extracolsep{\fill}}
    c l
    r                        % cvx time
    rr                       % cheb: rel_err, rel_gap
    rrrr                     % dcr:  rel_err, rel_gap, hit_time, speedup
@{}}
\toprule
\multirow{2}{*}{\textbf{Topology}}
  & \multirow{2}{*}{$n$}
  & \multirow{2}{*}{\shortstack{\textbf{CVXPY}\\Time (s)}}
  & \multicolumn{2}{c}{\textbf{Chebyshev}}
  & \multicolumn{4}{c}{\textbf{DCR (Ours)}} \\
\cmidrule(lr){4-5}\cmidrule(lr){6-9}
  & & & Rel. Sol. Err. & Rel. Obj. Gap
      & Rel. Sol. Err. & Rel. Obj. Gap
      & Time (s) & Speedup \\
\midrule
\multirow{8}{*}{\rotatebox{90}{\textbf{grid2d}}}
  & 50   & 0.05  & $5.26\times10^{-1}$ & $1.92\times10^{-1}$
                 & $\mathbf{5.79\times10^{-5}}$ & $\mathbf{3.93\times10^{-6}}$
                 & 1.40 & 0.04$\times$ \\
  & 100  & 0.06  & $9.22\times10^{-1}$ & $3.66\times10^{-1}$
                 & $\mathbf{1.51\times10^{-5}}$ & $\mathbf{4.81\times10^{-7}}$
                 & 1.76 & 0.03$\times$ \\
  & 200  & 0.14  & $4.91\times10^{-1}$ & $8.17\times10^{-2}$
                 & $\mathbf{1.02\times10^{-10}}$ & $\mathbf{2.13\times10^{-12}}$
                 & 2.47 & 0.06$\times$ \\
  & 500  & 1.33  & $3.61\times10^{-1}$ & $4.54\times10^{-2}$
                 & $\mathbf{3.96\times10^{-4}}$ & $\mathbf{9.48\times10^{-6}}$
                 & 2.27 & \textbf{0.6}$\times$ \\
  & 700  & 2.90  & $7.44\times10^{-1}$ & $1.62\times10^{-1}$
                 & $\mathbf{4.08\times10^{-4}}$ & $\mathbf{1.35\times10^{-5}}$
                 & 2.97 & \textbf{1.0}$\times$ \\
  & 1000 & 10.47 & $4.77\times10^{-1}$ & $6.94\times10^{-2}$
                 & $\mathbf{3.32\times10^{-9}}$ & $\mathbf{5.87\times10^{-11}}$
                 & 2.76 & \textbf{3.8}$\times$ \\
  & 1500 & 24.89 & $4.11\times10^{-1}$ & $4.93\times10^{-2}$
                 & $\mathbf{1.27\times10^{-3}}$ & $\mathbf{2.78\times10^{-5}}$
                 & 13.41 & \textbf{1.9}$\times$ \\
  & 2000 & 78.56 & $4.47\times10^{-1}$ & $6.24\times10^{-2}$
                 & $\mathbf{2.22\times10^{-3}}$ & $\mathbf{5.83\times10^{-5}}$
                 & 5.94 & \textbf{13.2}$\times$ \\
\midrule
\multirow{8}{*}{\rotatebox{90}{\textbf{ER}}}
  & 50   & 0.02  & $9.63\times10^{-2}$ & $1.82\times10^{-3}$
                 & $\mathbf{2.39\times10^{-4}}$ & $\mathbf{7.85\times10^{-8}}$
                 & 1.16 & 0.02$\times$ \\
  & 100  & 0.03  & $3.91\times10^{-1}$ & $3.15\times10^{-2}$
                 & $\mathbf{1.22\times10^{-3}}$ & $\mathbf{2.59\times10^{-6}}$
                 & 1.04 & 0.03$\times$ \\
  & 200  & 0.16  & $4.46\times10^{-1}$ & $3.51\times10^{-2}$
                 & $\mathbf{1.45\times10^{-4}}$ & $\mathbf{1.07\times10^{-6}}$
                 & 1.85 & 0.08$\times$ \\
  & 500  & 1.67  & $1.20\times10^{0}$  & $2.18\times10^{-1}$
                 & $\mathbf{1.25\times10^{-7}}$ & $\mathbf{8.17\times10^{-10}}$
                 & 1.54 & \textbf{1.1}$\times$ \\
  & 700  & 4.20  & $1.17\times10^{0}$  & $2.67\times10^{-1}$
                 & $\mathbf{8.45\times10^{-4}}$ & $\mathbf{1.41\times10^{-5}}$
                 & 2.97 & \textbf{1.4}$\times$ \\
  & 1000 & 14.14 & $1.78\times10^{0}$  & $5.32\times10^{-1}$
                 & $\mathbf{6.85\times10^{-9}}$ & $\mathbf{2.32\times10^{-11}}$
                 & 3.19 & \textbf{4.4}$\times$ \\
  & 1500 & 38.98 & $3.88\times10^{-1}$ & $2.63\times10^{-2}$
                 & $\mathbf{8.43\times10^{-4}}$ & $\mathbf{2.68\times10^{-6}}$
                 & 3.60 & \textbf{10.8}$\times$ \\
  & 2000 & 105.39& $2.62\times10^{-1}$ & $7.92\times10^{-3}$
                 & $\mathbf{4.09\times10^{-3}}$ & $\mathbf{2.74\times10^{-5}}$
                 & 11.89 & \textbf{8.9}$\times$ \\
\bottomrule
\end{tabular*}
\end{table*}

\vspace{1mm}
\subsubsection{Summary}
Table~\ref{tab:perf_summary} summarizes the performance of DCR and Chebyshev across all configurations. First, DCR consistently achieves much higher accuracy, outperforming Chebyshev by several orders of magnitude on both grid2d and ER topologies. While Chebyshev's relative solution error remains above $9.6\times10^{-2}$ and even exceeds $1$ on ER graphs, DCR attains errors as low as $10^{-10}$ on grid2d and $10^{-9}$ on ER, with almost all cases satisfying the $10^{-3}$ criterion. Second, the speedup of DCR over the convex solver increases with problem size. Although DCR is slower at small scales, it achieves up to $13.2\times$ speedup on grid2d graph and $10.8\times$ on ER graph at large $n$, demonstrating that DCR becomes increasingly competitive as problem size grows. Third, DCR maintains both accuracy and efficiency across different graph topologies, indicating that the learned pseudoinverse approximation $\widetilde{L}^{\dagger}$ is robust to variations in graph structure.

\section{Conclusion}
\label{sec:conclusion}
This work addresses a key challenge in Laplacian-regularized optimization caused by the dense and ill-conditioned Laplacian pseudoinverse. We propose a Difference-of-Convex Regularizer (DCR) graph learning framework that replaces direct pseudoinverse computation with a differentiable, spectrally structured approximation learned via regularized Maximum Likelihood Estimation (MLE) and Convex–Concave Procedure (CCCP) iterations. By reformulating Laplacian-Regularized Nonnegative Least Squares (LR-NNLS) through a dual representation, DCR decouples pseudoinverse approximation from instance-specific optimization, enabling cross-instance reuse and efficient primal recovery. We further establish theoretical guarantees on fixed-point existence and stability. Experiments across diverse graph topologies and scales show that DCR captures the inverse spectral behavior of the Laplacian, achieving stable convergence and accurate reconstruction under severe ill-conditioning. Compared with convex solvers and graph filtering methods, DCR achieves similar accuracy while significantly improving performance at large scales. Future work will extend the framework to broader graph-regularized problems and dynamic graph settings.

%\section*{Acknowledgments}

%\newpage
\bibliographystyle{IEEEtran}
\bibliography{LRMP}

%\vspace{2mm}
\appendix
\section{CCCP Formulations for Alternative Spectral Regularizers}
\label{sec:appendix-cccp-regularizers}
This appendix derives CCCP iterations for two representative spectral regularizers in Table~\ref{tab:regularizers}. As in the main text, we set $X=\Sigma^{-1}$ and formulate the objective as a DC program, with $S_k = y_k y_k^\top$. Unlike the scale-invariant regularization in \ref{subsec:phase1}, these variants alter the scaling behavior of the Tyler likelihood. To ensure well-posedness and avoid pathological scaling, we enforce the normalization $\mathrm{tr}(\Sigma)=n-1$.

\begin{figure}[!t]
    \centering
    \subfloat[Nuclear-norm]{
        \includegraphics[width=0.48\linewidth]{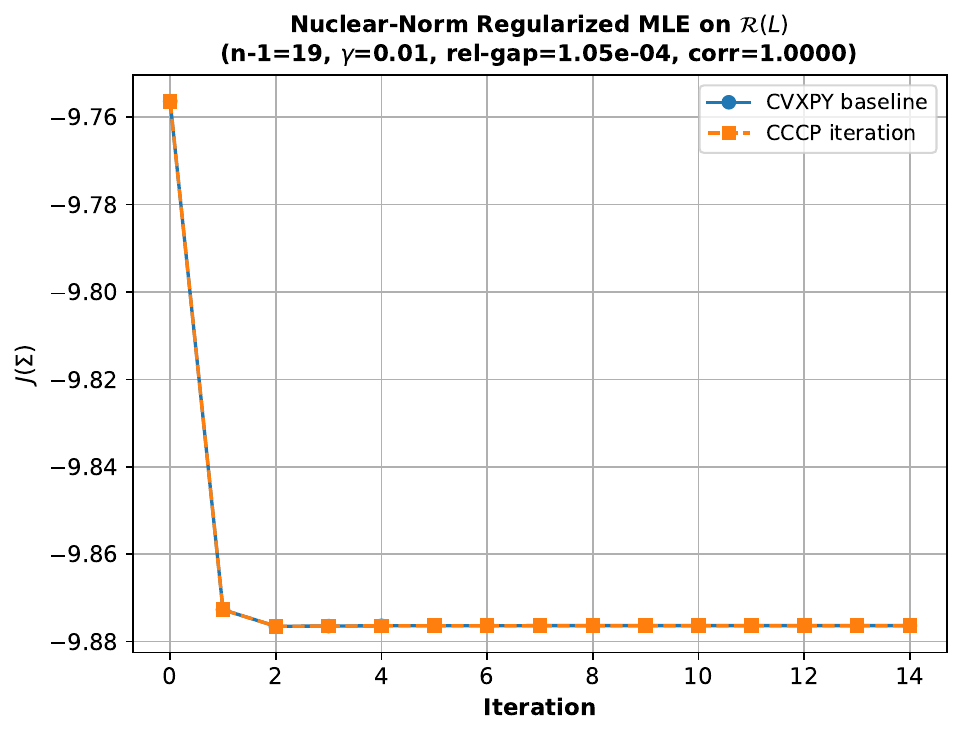}
        \label{fig:exam_appendix_nuclear_norm_mle_tyler_vs_mmcvxpy_convergence}
    }   
    \subfloat[Log-Determinant]{
        \includegraphics[width=0.48\linewidth]{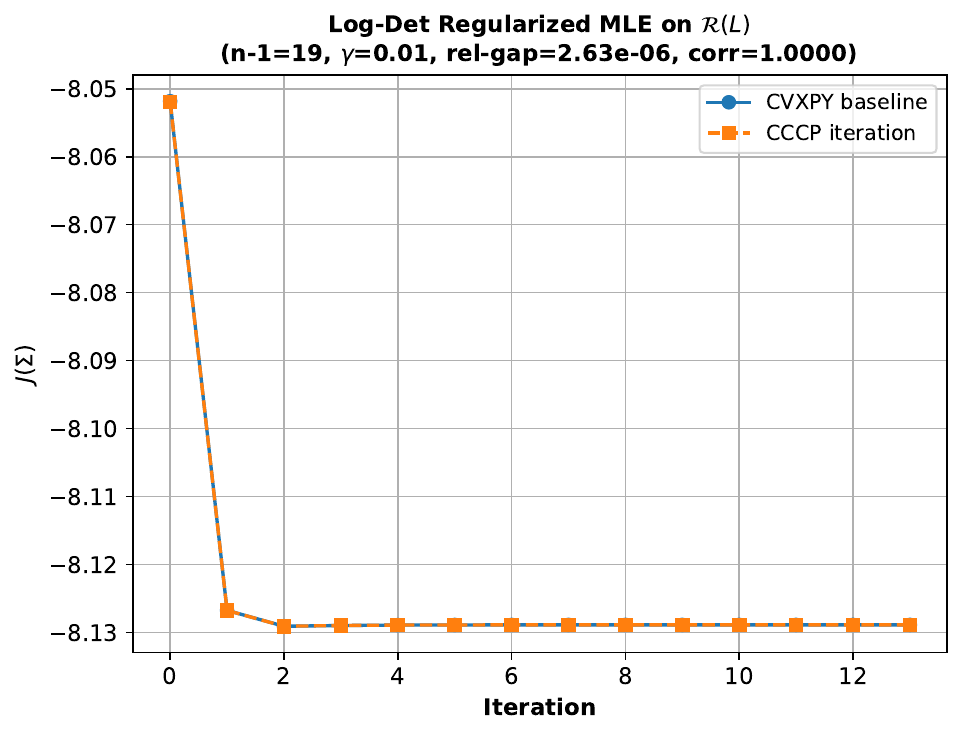}
        \label{fig:exam_appendix_log_mle_tyler_vs_mmcvxpy_convergence}
    }
    \caption{Convergence comparison between the CVXPY baseline and the fixed-point iterations \eqref{eq:appendix-nuclear-iteration} and \eqref{eq:appendix-logdet-iteration} for \eqref{prob:appendix-nuclear-sigma-mle} and \eqref{prob:appendix-logdet-sigma-mle} on $\mathcal{R}(L)$. Nearly identical objective trajectories confirm consistency with the CCCP formulation.}
    \label{fig:exam_appendix_nuclear_log_mle_tyler_vs_mmcvxpy_convergence}
\end{figure}

%\vspace{-4mm}
\subsection{Nuclear-Norm Regularization: $\|\Sigma^{-1/2}\|_*$}
\label{subsec:cccp-nuclear}
Consider the following nuclear-norm regularized MLE:
\begin{align}
\label{prob:appendix-nuclear-sigma-mle}
\min_{\Sigma\succ0}\;
J(\Sigma)
&=
\log\det(\Sigma)
+
\frac{n-1}{K}\sum_{k=1}^K
\log\!\big(y_k^\top \Sigma^{-1} y_k\big) \notag \\
&+
\gamma\,\|\Sigma^{-1/2}\|_*.
\end{align}
Here, the regularizer $\gamma\,\|\Sigma^{-1/2}\|_*$ penalizes small eigenvalues of $\Sigma$ while inducing a milder spectral shrinkage compared with $\mathrm{tr}(\Sigma^{-1})$. Note that for positive definite matrices $\Sigma$:
\begin{align}
\|\Sigma^{-1/2}\|_* 
=\mathrm{tr}\!(\sqrt{(\Sigma^{-1/2})^{\top}(\Sigma^{-1/2})})
= \sum_i \lambda_i^{-1/2}.
\end{align}
Then the problem \eqref{prob:appendix-nuclear-sigma-mle} becomes:
\begin{align}
\label{eq:appendix-nuclear-sigma}
\min_{\Sigma\succ0}\;
J(\Sigma)
=
&\log\det(\Sigma)
+
\frac{n-1}{K}\sum_{k=1}^K
\log\!\big(y_k^\top \Sigma^{-1} y_k\big) \notag \\
&+
\gamma\,\mathrm{tr}(\Sigma^{-1/2}).
\end{align}

%\vspace{1mm}
\textit{CCCP Iteration.}
Let $X=\Sigma^{-1}\succ0$. Since $\mathrm{tr}(\Sigma^{-1/2}) = \mathrm{tr}(X^{1/2})$, the objective becomes:
\begin{align}
\label{eq:appendix-nuclear-x}
F(X)
=
&-\log\det(X)
+
\frac{n-1}{K}\sum_{k=1}^K
\log\!\big(\mathrm{tr}(X S_k)\big) \notag \\
&+
\gamma\,\mathrm{tr}(X^{1/2}).
\end{align}
Define $f(X) = -\log\det(X)$ and:
\begin{align}
g(X) = \frac{n-1}{K}\sum_{k=1}^K \log(\mathrm{tr}(X S_k))+\gamma\,\mathrm{tr}(X^{1/2}),
\end{align}
where $f(X)$ is convex on $X\succ0$, while $g(X)$ is concave. Linearizing the concave function $g(X)$ at $X_t$ gives:
\begin{align}
\label{eq:appendix-nuclear-subproblem}
X_{t+1}
\in
\arg\min_{X\succ0}
f(X) + \langle \nabla g(X_t), X \rangle .
\end{align}
The gradients are $\nabla f(X) = -X^{-1}$ and:
\begin{align}
\nabla g(X) =
\frac{n-1}{K}\sum_{k=1}^K
\frac{S_k}{\mathrm{tr}(X S_k)}
+
\frac{\gamma}{2}X^{-1/2}.
\end{align}
The optimality condition of~\eqref{eq:appendix-nuclear-subproblem} yields:
\begin{align}
X_{t+1}^{-1}
=
\frac{n-1}{K}\sum_{k=1}^K
\frac{y_k y_k^\top}{y_k^\top X_t y_k}
+
\frac{\gamma}{2}X_t^{-1/2}.
\end{align}
Using $\Sigma_t = X_t^{-1}$, we obtain:
\begin{align}
\label{eq:appendix-nuclear-iteration}
\Sigma_{t+1}
=
\frac{n-1}{K}\sum_{k=1}^K
\frac{y_k y_k^\top}{y_k^\top \Sigma_t^{-1} y_k}
+
\frac{\gamma}{2}\Sigma_t^{1/2},
\end{align}
followed by the normalization $\mathrm{tr}(\Sigma_{t+1})={n-1}$. When $\gamma=0$, the iteration reduces to the classical Tyler update.

%\vspace{1mm}
\textit{Validation.}
Fig.~\ref{fig:exam_appendix_nuclear_log_mle_tyler_vs_mmcvxpy_convergence} (a) compares CVXPY with the fixed-point iteration~\eqref{eq:appendix-nuclear-iteration} for~\eqref{prob:appendix-nuclear-sigma-mle} on $\mathcal{R}(L)$. The trajectories nearly coincide and converge within a few iterations, confirming the correctness of the CCCP update, with relative Frobenius gap $1.05\times10^{-4}$ and correlation $1.0000$.

\subsection{Log-Determinant Barrier Regularization: $-\gamma\,\log\det(\Sigma)$}
\label{subsec:cccp-logdet}
Consider the following log-determinant regularized MLE:
\begin{align}
\label{prob:appendix-logdet-sigma-mle}
\min_{\Sigma\succ0}\;
J(\Sigma)
=
&\; \log\det(\Sigma)
+
\frac{n-1}{K}\sum_{k=1}^K
\log\!\big(y_k^\top \Sigma^{-1} y_k\big)
\notag\\
&-
\gamma\,\log\det(\Sigma).
\end{align}

The regularizer $-\gamma\,\log\det(\Sigma)$ acts as a spectral barrier that prevents eigenvalues of $\Sigma$ from collapsing to zero. Since $-\log\det(\Sigma) = -\sum_i \log \lambda_i$, this penalty grows unbounded as any eigenvalue $\lambda_i \to 0$, thereby stabilizing the spectrum of $\Sigma$. Combining the log-determinant terms, problem \eqref{prob:appendix-logdet-sigma-mle} becomes:
\begin{align}
\label{eq:appendix-logdet-sigma}
\min_{\Sigma\succ0}\;
J(\Sigma)
=
(1-\gamma)\log\det(\Sigma)
+
\frac{n-1}{K}\sum_{k=1}^K
\log\!\big(y_k^\top \Sigma^{-1} y_k\big).
\end{align}

%\vspace{1mm}
\textit{CCCP Iteration.}
Let $X=\Sigma^{-1}\succ0$, and \eqref{eq:appendix-logdet-sigma} becomes:
\begin{align}
\label{eq:appendix-logdet-x}
F(X)
=
-(1-\gamma)\log\det(X)
+
\frac{n-1}{K}\sum_{k=1}^K
\log\!\big(\mathrm{tr}(X S_k)\big).
\end{align}
Define $f(X) = -(1-\gamma)\log\det(X)$ and:
\begin{align}
g(X) =\frac{n-1}{K}\sum_{k=1}^K\log(\mathrm{tr}(X S_k)),
\end{align}
where $f(X)$ is convex on $X\succ0$, while $g(X)$ is concave. Linearizing $g(X)$ at $X_t$ yields:
\begin{align}
\label{eq:appendix-logdet-subproblem}
X_{t+1}
\in
\arg\min_{X\succ0}
f(X) + \langle \nabla g(X_t), X \rangle .
\end{align}
The gradients are $\nabla f(X) = -(1-\gamma)X^{-1}$ and:
\begin{align}
\nabla g(X) =
\frac{n-1}{K}\sum_{k=1}^K
\frac{S_k}{\mathrm{tr}(X S_k)} .
\end{align}
The optimality condition of~\eqref{eq:appendix-logdet-subproblem} gives:
\begin{align}
(1-\gamma)X_{t+1}^{-1}
=
\frac{n-1}{K}\sum_{k=1}^K
\frac{y_k y_k^\top}{y_k^\top X_t y_k}.
\end{align}
Using $\Sigma_t = X_t^{-1}$, we obtain:
\begin{align}
\label{eq:appendix-logdet-iteration}
\Sigma_{t+1}
=
\frac{1}{1-\gamma}
\left(
\frac{n-1}{K}\sum_{k=1}^K
\frac{y_k y_k^\top}{y_k^\top \Sigma_t^{-1} y_k}
\right),
\end{align}
followed by the normalization $\mathrm{tr}(\Sigma_{t+1}) = {n-1}$. When $\gamma=0$, the iteration reduces to the classical Tyler fixed-point update.

\vspace{1mm}
\textit{Validation.}
Fig.~\ref{fig:exam_appendix_nuclear_log_mle_tyler_vs_mmcvxpy_convergence} (b) shows the convergence of CVXPY and the fixed-point iteration~\eqref{eq:appendix-logdet-iteration} for~\eqref{prob:appendix-logdet-sigma-mle} on $\mathcal{R}(L)$. The two trajectories almost overlap, confirming the correctness of the CCCP update. Both methods converge within a few iterations and yield nearly identical solutions, with a relative Frobenius gap of $2.63\times10^{-6}$ and correlation $1.0000$.

\end{document}